\documentclass[12pt,reqno]{amsart}
\usepackage{amssymb}
\usepackage[latin1]{inputenc}
\usepackage{dsfont}
\usepackage{hyperref}
\usepackage{fullpage}
\newtheorem{theorem}{Theorem}[section]
\newtheorem{lemma}[theorem]{Lemma}
\newtheorem{proposition}[theorem]{Proposition}
\newtheorem{corollary}[theorem]{Corollary}
\theoremstyle{definition}
\newtheorem{definition}[theorem]{Definition}

\theoremstyle{remark}
\newtheorem{remark}[theorem]{Remark}
\numberwithin{equation}{section}
\begin{document}
\title{On some universal $\sigma$-finite measures and some extensions of Doob's optional stopping theorem}
\author[J. Najnudel]{Joseph Najnudel}
\address{Institut f\"ur Mathematik, Universit\"at Z\"urich, Winterthurerstrasse 190,
8057-Z\"urich, Switzerland}
\email{\href{mailto:joseph.najnudel@math.uzh.ch}{joseph.najnudel@math.uzh.ch}}
\author[A. Nikeghbali]{Ashkan Nikeghbali}
\email{\href{mailto:ashkan.nikeghbali@math.uzh.ch}{ashkan.nikeghbali@math.uzh.ch}}
 \dedicatory{Dedicated
to Marc Yor for his 60th birthday}

\date{\today}
\begin{abstract}
In this paper, we associate, to any submartingale of class $(\Sigma)$, defined on a filtered probability
space $(\Omega, \mathcal{F}, \mathbb{P}, (\mathcal{F}_t)_{t \geq 0})$, which satisfies some technical conditions, 
a $\sigma$-finite measure $\mathcal{Q}$ on $(\Omega, \mathcal{F})$, such that for all 
$t \geq 0$, and for all events $\Lambda_t
 \in \mathcal{F}_t$: $$ \mathcal{Q} [\Lambda_t, g\leq t] = \mathbb{E}_{\mathbb{P}} [\mathds{1}_{\Lambda_t} X_t]$$
where $g$ is the last hitting time of zero of the process $X$. This measure $\mathcal{Q}$ has already been 
defined in several particular cases, some of them are involved in the study of Brownian penalisation, 
and others are related with problems in mathematical finance. More precisely, the existence of $\mathcal{Q}$ 
in the general case solves a problem stated by D. Madan, B. Roynette and M. Yor, in a paper studying the link
between Black-Scholes formula and last passage times of certain submartingales. 
Moreover, the equality defining $\mathcal{Q}$ remains true if one replaces the fixed time $t$ by any bounded
stopping time. This generalization can be viewed as an extension of Doob's optional stopping theorem. 
\end{abstract}
\maketitle
%\tableofcontents
\section{Introduction}
This work finds its origin in a recent paper by Madan, Roynete and Yor \cite{MRY} and a set of lectures by Yor \cite{BeYor} where the authors are able to represent the price of a European put option in terms of the probability distribution of some last passage time. More precisely, they prove that if $(M_t)_{t \geq 0}$ is a continuous nonnegative local martingale  defined on a filtered probability space $(\Omega,\mathcal{F},(\mathcal{F}_t)_{t \geq 0},\mathbb{P})$ satisfying the usual assumptions, and such that  $\lim_{t\to\infty}M_t=0$, then 
\begin{equation}\label{BS}
	(K-M_t)^+=K\mathbb{P}(g_K\leq t|\mathcal{F}_t)
	\end{equation}where $K\geq0$ is a constant and $g_K=\sup\{t \geq 0: M_t=K\}$.  Formula (\ref{BS}) tells that it is enough to know the terminal value of the submartingale $(K-M_t)^+$ and its last zero $g_K$ to reconstruct it.  Yet a nicer interpretation of  (\ref{BS}) is suggested in \cite{BeYor} and \cite{MRY}:  there exists a measure $\mathcal{Q}$, a random time $g$, such that the submartingale $X_t=(K-M_t)^+$ satisfies
\begin{equation}\label{masterequation}
\mathcal{Q} \left[ F_t \, \mathds{1}_{g \leq t} \right] = \mathbb{E} \left[F_t X_t \right],	
\end{equation}
for any $t\geq0$ and for any  bounded $\mathcal{F}_t$-measurable random variable $F_t$. Indeed it easily follows from (\ref{BS}) that in this case  $\mathcal{Q}=K. \mathbb{P}$ and $g=g_K$.  It is also clear that if a stochastic process $X$ satisfies  (\ref{masterequation}), then it is a submartingale. The problem of finding the class of submartingales which satisfy (\ref{masterequation}) is posed in  \cite{BeYor} and \cite{MRY} and is the main motivation of this paper:
\newline
\textbf{Problem 1 (\cite{BeYor} and \cite{MRY}):}  for which submartingales $X$ can we find a $\sigma$-finite measure $\mathcal{Q}$ and the end of an optional set $g$ such that
\begin{equation}\label{masterequation2}
\mathcal{Q} \left[ F_t \, \mathds{1}_{g \leq t} \right] = \mathbb{E} \left[F_t X_t \right]?
\end{equation}
\newline
Identity (\ref{masterequation2}) is reminiscent of the stopping theorem for uniformly integrable martingales. Indeed, if $M$ is a c\`adl\`ag, uniformly integrable martingale, then it can be represented as $M_t=\mathbb{E}[M_\infty|\mathcal{F}_t]$, and hence the terminal value of $M$, i.e. $M_\infty$, is enough to obtain the martingale $M$. But we also note that if we write $g=\sup\{t \geq 0 :\;M_t=0\}$, then $$M_t=\mathbb{E}[M_\infty \mathds{1}_{g\leq t}|\mathcal{F}_t],$$since $\mathbb{E}[M_\infty \mathds{1}_{g> t}|\mathcal{F}_t]=0$. Thus (\ref{masterequation2}) holds for $M$, where the measure $\mathcal{Q}$ is the signed measure $\mathcal{Q}=M_\infty.\mathbb{P}$. Consequently, the stopping theorem can also be interpreted as the existence of a (signed) measure and the end of an optional set from which one can recover the uniformly integrable martingale $M$. But (\ref{masterequation2}) does not admit a straightforward generalization to martingales which are not uniformly integrable: indeed, such a   measure $\mathcal{Q}$ would be  real valued and infinite. We hence propose the following problem:
\newline
\textbf{Problem 2:} given a continuous martingale $M$, can we find two $\sigma$-finite measures $\mathcal{Q}^{(+)}$ and $\mathcal{Q}^{(-)}$, such that for all $t\geq0$ and for all bounded $\mathcal{F}_t$-measurable variables $F_t$:
\begin{equation}\label{masterequation3}
\left(\mathcal{Q}^{(+)}-\mathcal{Q}^{(-)}\right) \left[ F_t \, \mathds{1}_{g \leq t} \right] = \mathbb{E} \left[F_t M_t \right],
\end{equation}
with $g=\sup\{t \geq 0:\;M_t=0\}$?
\newline
Identities (\ref{masterequation2}) and (\ref{masterequation3}) can hence be interpreted as an extension of  Doob's optional stopping theorem for fixed times $t$. 
\newline
It is also noticed in \cite{MRY} that other instances of formula (\ref{masterequation}) have already been discovered: for example, in \cite{AY1}, Az\'ema and Yor proved that for any continuous and uniformly martingale $M$, (\ref{masterequation2}) holds for $X_t=|M_t|$, $\mathcal{Q}=|M_\infty|.\mathbb{P}$ and $g=\sup\{t \geq 0: M_t=0\}$, or equivalently $$|M_t|=\mathbb{E}[|M_\infty|\mathds{1}_{g\leq t}|\mathcal{F}_t].$$ Here again the measure $\mathcal{Q}$ is finite. Recently,  other particular cases where the measure $\mathcal{Q}$ is not finite  were obtained by  Najnudel, Roynette and Yor
in their study of Brownian penalisation (see \cite{NRY}). For example, they prove the existence of the measure $\mathcal{Q}$ when $X_t=|W_t|$ is the absolute value of the standard Brownian Motion. In this case, the measure $\mathcal{Q}$ is not finite but only $\sigma$-finite and is singular with respect to the Wiener measure: it satisfies $\mathcal{Q}(g=\infty)=0$, where $g=\sup\{t \geq 0:\;W_t=0\}$. 

In the special case where the submartingale $X$ is of class $(D)$, Problem 1 was recently solved\footnote{In fact, as mentioned in \cite{CNP}, the solution is essentially contained and somehow hidden in \cite{AMY}.} in \cite{CNP} in relation with the study of the draw-down process..  In this case, the measure $\mathcal{Q}$ is finite. The relevant family of submartingales is  the class $(\Sigma)$:
\begin{definition}[(\cite{N,Y})]
Let $(\Omega,\mathcal{F},(\mathcal{F}_t)_{t \geq 0},\mathbb{P})$ be a filtered probability space. A nonnegative (local) submartingale $(X_t)_{t \geq 0}$ is of class $(\Sigma)$, if it can be decomposed as
$X_t = N_t + A_t$ where $(N_t)_{t \geq 0}$ and $(A_t)_{t \geq 0}$ are $(\mathcal{F}_t)_{t \geq 0}$-adapted 
processes satisfying the following assumptions:
\begin{itemize}
\item $(N_t)_{t \geq 0}$ is a c\`adl\`ag (local) martingale;
\item $(A_t)_{t \geq 0}$ is a continuous increasing process, with $A_0 = 0$;
\item The measure $(dA_t)$ is carried by the set $\{t \geq 0, X_t = 0 \}$.
\end{itemize}
\end{definition}
The definition of the class $(\Sigma)$ goes back to Yor (\cite{Y}) when $X$ is continuous  and some of its main
 properties
which we shall use frequently in this paper were studied in \cite{N}. 
It is shown in \cite{AMY} and \cite{CNP} that if $X$ is of class $(\Sigma)$ and of class $(D)$, then it satisfies (\ref{masterequation}) with $g=\sup\{t \geq 0:\; X_t=0\}$ and $\mathcal{Q}=X_\infty. \mathbb{P}$, or equivalently $$X_t=\mathbb{E}[X_\infty\mathds{1}_{g\leq t}|\mathcal{F}_t].$$ Now, what happens if $X$ is of class $(\Sigma)$ but satisfies $A_\infty=\infty$ almost surely? This is the case for example if one works on the space $\mathcal{C}(\mathbb{R}_+,\mathbb{R})$ of continuous functions endowed with the filtration $(\mathcal{F}_t)_{t \geq 0}$ generated by the coordinate process $(Y_t)_{t \geq 0}$ and with the Wiener measure $\mathbb{W}$, and if $X_t=|Y_t|$ (here the increasing process is the local time which is known to be infinite almost surely at infinity).  In this case, as it was already mentioned,  the existence of the measure $\mathcal{Q}$, which is singular
 with respect to $\mathbb{W}$, was established in \cite{NRY}.  But the general situation  when $X$ is of class $(\Sigma)$ and satisfies $A_\infty=\infty$ almost surely is nevertheless more subtle: one will need to make some non-standard assumptions on the filtration $(\mathcal{F}_t)_{t \geq 0}$. 
Indeed,
let us assume that in the filtration $(\mathcal{F}_t)_{t \geq 0}$, $\mathcal{F}_0$ contains 
all the $\mathbb{P}$-negligible sets (i.e. the filtration is complete), and that
under $\mathbb{P}$, $A_{\infty} = \infty$ almost surely, and then $g = \infty$ a.s. (e.g.  $X$ is the absolute value of a Brownian motion). For all $t \geq 0$, the event $\{g >
 t\}$ has probability one (under $\mathbb{P}$) and then, is in $\mathcal{F}_0$ and, a fortiori, in $\mathcal{F}_t$. 
If one assumes that $\mathcal{Q}$ exists, one has
$$\mathcal{Q} [g > t, g \leq t] = \mathbb{E}_{\mathbb{P}} [\mathds{1}_{g > t} \, X_t],$$
and then:
$$\mathbb{E}_{\mathbb{P}} [A_t] \leq  \mathbb{E}_{\mathbb{P}}[X_t] =  0$$
which is absurd.
\newline
The goal of this paper is to show that under some technical conditions on the filtration $(\mathcal{F}_t)_{t \geq 0}$,   Problem 1 can  be solved for all submartingales of the class $(\Sigma)$.  The measure $\mathcal{Q}$ is constructed explicitly. Since for continuous martingales, $M^+$ and $M^-$ are of class $(\Sigma)$, we shall be able to solve Problem 2 and hence interpret our results as an extension of Doob's optional stopping theorem.  Our approach is based on martingale techniques only and we are hence able to obtain the measure $\mathcal{Q}$ for a wide range of processes which can possibly jump, thus including the generalized Az\'ema submartingales in the filtration of the zeros of Bessel processes of dimension in $(0,2)$ and the draw-down process $X_t=S_t-M_t$ where $M$ is a martingale with no positive jumps and $S_t=\sup_{u\leq t} M_u$. In particular, the existence of $\mathcal{Q}$ does not require any scaling or Markov property for $X$.
More precisely, the paper is organized as follows:
\begin{itemize}
	\item in Section 2, we state our technical condition on the filtration $(\mathcal{F}_t)_{t \geq 0}$ and then state and prove our main theorem about the existence and the uniqueness of the measure $\mathcal{Q}$ for submartingales of the class $(\Sigma)$. We then deduce the solution to Problem 2, hence interpreting (\ref{masterequation2}) and (\ref{masterequation3}) together as an extension of Doob's optional stopping theorem. We also give the image of the measure $\mathcal{Q}$ by the functional $A_\infty$;
	\item in Section 3, we give several examples of such a measure $\mathcal{Q}$ in classical and less classical settings.
\end{itemize}
\section{Description of the $\sigma$-finite measure}
The main result  of this paper states that the existence and the uniqueness of $\mathcal{Q}$ essentially
 holds for all submartingales 
$(X_t)_{t \geq 0}$ of class $(\Sigma)$. However, before stating this result, we need to introduce 
some technicalities on filtrations. 
\subsection{A measure extension theorem}
Let us first introduce the following definition:
\begin{definition} \label{P}
Let $(\Omega, \mathcal{F}, (\mathcal{F}_t)_{t \geq 0})$ be a filtered measurable space, such that
$\mathcal{F}$ is the $\sigma$-algebra generated by $\mathcal{F}_t$, $t \geq 0$: $\mathcal{F}=\bigvee_{t\geq0}\mathcal{F}_t$. We shall say that the property (P) holds if and only if $\mathcal{F}_t = \mathcal{G}_{t+}$, where $(\mathcal{G}_t)_{t \geq 0}$
is a filtration, enjoying the following conditions: 
\begin{itemize}
\item For all $t \geq 0$, $\mathcal{G}_t$ is generated by a countable number of sets;
\item For all $t \geq 0$, there exists a Polish space $\Omega_t$, and a surjective map 
 $\pi_t$ from $\Omega$ to $\Omega_t$, such that $\mathcal{G}_t$ is the $\sigma$-algebra of the inverse
 images, by $\pi_t$, of Borel sets in $\Omega_t$, and such that for all $B \in \mathcal{G}_t$, 
 $\omega \in \Omega$, $\pi_t (\omega) \in \pi_t(B)$ implies $\omega \in B$;
\item If $(\omega_n)_{n \geq 0}$ is a sequence of elements of $\Omega$, such that for all $N \geq 0$,
$$\bigcap_{n = 0}^{N} A_n (\omega_n) \neq \emptyset,$$
where $A_n (\omega_n)$ is the intersection of the sets in $\mathcal{G}_n$ containing $\omega_n$, 
then:
$$\bigcap_{n = 0}^{\infty} A_n (\omega_n) \neq \emptyset.$$
\end{itemize}
\end{definition}
\noindent
Note that if $(\Omega, \mathcal{F}, (\mathcal{F}_t)_{t \geq 0})$ satisfies the property (P), then 
$(\mathcal{F}_t)_{t \geq 0}$ is right-continuous. 
The  technical definition \ref{P} is needed in order to state the following lemma, which is fundamental
 in the construction of the measure $\mathcal{Q}$. In particular, it provides sufficient conditions under which a finite measure $\widehat{\mathbb{P}}$ defined on each $\mathcal{F}_t$ by $\widehat{\mathbb{P}}=M_t.\mathbb{P}$, where $M_t$ is a (true) positive martingale, can be extended to a measure on $\mathcal{F}=\bigvee_{t\geq0}\mathcal{F}_t$.
\begin{lemma} \label{extension}
Let $(\Omega, \mathcal{F}, (\mathcal{F}_t)_{t \geq 0})$ be a filtered probability space satisfying the 
property (P), and let, for $t \geq 0$, $\mathbb{Q}_t$ be a finite measure on $(\Omega, \mathcal{F}_t)$, 
such that for all $t \geq s \geq 0$, $\mathbb{Q}_s$ is the restriction of $\mathbb{Q}_t$ to $\mathcal{F}_s$.
Then, there exists a unique measure $\mathbb{Q}$ on $(\Omega, \mathcal{F})$ such that for all $t \geq 0$,
its restriction to $\mathcal{F}_t$ is equal to $\mathbb{Q}_t$. 
\end{lemma}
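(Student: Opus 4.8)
The plan is to realise $(\Omega,\mathcal{F},(\mathcal{F}_t))$, as far as measures are concerned, as a sequential projective limit of standard Borel spaces, to build the required measure on that projective limit via the Ionescu--Tulcea theorem, and then to transport it back to $\Omega$ using the third clause of property (P).

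Since the filtration is determined by its integer levels ($\mathcal{F}_t=\mathcal{G}_{t+}=\bigcap_{s>t}\mathcal{G}_s$, whence $\mathcal{F}=\bigvee_t\mathcal{F}_t=\sigma(\bigcup_n\mathcal{G}_n)$), I would first restrict attention to the $\mathcal{G}_n$, $n\in\mathbb{N}$, on which the $\mathbb{Q}_t$ induce a consistent family of finite measures. Using clauses 1 and 2 of (P) I would record that the atom $A_n(\omega)$ of $\omega$ in $\mathcal{G}_n$ is exactly the fibre $\pi_n^{-1}(\{\pi_n(\omega)\})$: the inclusion $A_n(\omega)\subseteq\pi_n^{-1}(\{\pi_n(\omega)\})$ holds because points of the Polish space $\Omega_n$ are separated by open sets, whose $\pi_n$-preimages lie in $\mathcal{G}_n$, while the reverse inclusion is precisely the \emph{saturation} hypothesis $\pi_n(\omega')\in\pi_n(B)\Rightarrow\omega'\in B$. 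As $\mathcal{G}_n\subseteq\mathcal{G}_{n+1}$, this shows $\pi_n$ is constant on the fibres of $\pi_{n+1}$, hence factors as $\pi_n=\rho_n\circ\pi_{n+1}$ with $\rho_n:\Omega_{n+1}\to\Omega_n$ well defined; $\rho_n$ is Borel since $\pi_{n+1}^{-1}(\rho_n^{-1}(D))=\pi_n^{-1}(D)\in\mathcal{G}_n$ for Borel $D$ and $\pi_{n+1}$ is onto. Writing $\Pi(\omega):=(\pi_n(\omega))_n$, the image $\Pi(\Omega)$ lies in the Borel set $T:=\{(x_n)\in\prod_n\Omega_n:\rho_n(x_{n+1})=x_n\ \text{for all }n\}$, and $\mathcal{F}=\Pi^{-1}(\mathcal{B}(\prod_n\Omega_n))$.

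The key observation is that clause 3 of (P) says exactly that $\Pi(\Omega)=T$. Indeed, given a thread $(x_n)\in T$, choose $\omega_n\in\pi_n^{-1}(\{x_n\})$ (surjectivity of $\pi_n$); the thread condition gives $\pi_N^{-1}(\{x_N\})\subseteq\pi_n^{-1}(\{x_n\})$ for $n\le N$, so $\bigcap_{n\le N}A_n(\omega_n)=\pi_N^{-1}(\{x_N\})\neq\emptyset$ for every $N$, and clause 3 produces $\omega^{*}$ with $\pi_n(\omega^{*})=x_n$ for all $n$, i.e. $\Pi(\omega^{*})=(x_n)$. With this in hand the construction proceeds: set $c:=\mathbb{Q}_0(\Omega)=\mathbb{Q}_t(\Omega)<\infty$ (if $c=0$ take $\mathbb{Q}=0$, otherwise normalise), let $\mu_n$ be the push-forward of $\mathbb{Q}_n$ by $\pi_n$, a Borel probability on the Polish space $\Omega_n$ with $(\rho_n)_{*}\mu_{n+1}=\mu_n$ by consistency, disintegrate $\mu_{n+1}$ along $\rho_n$ over $\mu_n$ (legitimate since $\Omega_{n+1}$ is standard Borel) to obtain a probability kernel $\kappa_n$ from $\Omega_n$ to $\Omega_{n+1}$ with $\mu_{n+1}=\int\kappa_n(x,\cdot)\,\mu_n(dx)$ and $\kappa_n(x,\rho_n^{-1}(\{x\}))=1$ for $\mu_n$-a.e. $x$, and apply the Ionescu--Tulcea theorem to $\mu_0$ and $(\kappa_n)$ to get a probability measure $\Lambda$ on $\prod_n\Omega_n$ whose $n$-th marginal is $\mu_n$ and which, for each $n$, gives full mass to $\{\rho_n(x_{n+1})=x_n\}$; hence $\Lambda(T)=1$.

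To finish I would transport $\Lambda$ to $\Omega$: since $\mathcal{F}=\Pi^{-1}(\mathcal{B}(\prod_n\Omega_n))$, $\Pi(\Omega)=T$ and $\Lambda(T)=1$, the assignment $\mathbb{Q}(\Pi^{-1}(S)):=c\,\Lambda(S)$ for Borel $S\subseteq\prod_n\Omega_n$ is unambiguous ($\Pi^{-1}(S)=\Pi^{-1}(S')$ forces $S\cap T=S'\cap T$, hence $\Lambda(S)=\Lambda(S')$) and $\sigma$-additive, so it defines a finite measure $\mathbb{Q}$ on $(\Omega,\mathcal{F})$; for $B=\pi_n^{-1}(D)\in\mathcal{G}_n$ one gets $\mathbb{Q}(B)=c\,\mu_n(D)=\mathbb{Q}_n(B)$, so $\mathbb{Q}$ restricts to $\mathbb{Q}_n$ on $\mathcal{G}_n$ and therefore to $\mathbb{Q}_t$ on $\mathcal{F}_t=\bigcap_{s>t}\mathcal{G}_s$ for every $t$. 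Uniqueness is immediate, since two such measures agree on the generating $\pi$-system $\bigcup_t\mathcal{F}_t$ and are finite. The main obstacle is conceptual rather than computational: to see that the technical clauses of (P) exist precisely to present $\Omega$ as (a space mapping onto) a sequential projective limit of Polish spaces and, through clause 3, to guarantee this map is onto, so that a measure built upstairs descends. I expect the tempting alternative — running Kolmogorov's classical compactness argument, inner-regularising each $\mathbb{Q}_n$ by a compact $K_n\subseteq\Omega_n$ and intersecting the cylinders $\pi_n^{-1}(K_n)$ — to be the real trap, because the bonding maps $\rho_n$ are only measurable, so $\rho_n$-images of compacts need not be compact and the finite-intersection property fails to close up; disintegration together with Ionescu--Tulcea sidesteps this entirely.
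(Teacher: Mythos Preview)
Your argument is correct and is morally the same construction as the paper's, but packaged differently. The paper stays on $\Omega$ throughout: it pushes each $\widetilde{\mathbb{Q}}_t$ to the Polish space $\Omega_t$, uses the existence of regular conditional distributions there (Stroock--Varadhan, Theorem~1.1.6), pulls these back to genuine regular conditional probabilities $(R_\omega)$ on $(\Omega,\mathcal{G}_t)$ given $\mathcal{G}_s$, and then invokes the Stroock--Varadhan extension theorem (their Theorem~1.1.9) directly on $(\Omega,(\mathcal{G}_n))$; clause~3 of (P) is used implicitly inside that cited theorem. You instead move permanently to the product $\prod_n\Omega_n$, build the measure there by disintegration plus Ionescu--Tulcea, and use clause~3 \emph{explicitly} to identify $\Pi(\Omega)$ with the thread space $T$ so that the product measure, which concentrates on $T$, can be transported unambiguously back to $\Omega$. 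The gain of your route is that it is self-contained and makes completely transparent why clause~3 is the crucial hypothesis (it is exactly surjectivity onto the projective limit); the paper's route is shorter on the page but outsources that insight to \cite{SV}. Your closing remark about the failure of the na\"ive Kolmogorov compactness argument is apt and worth keeping.
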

\begin{proof} The uniqueness is a direct application of the monotone class theorem, then, let us prove the 
existence. One can obviousely suppose that $\mathbb{Q}_t$ is a probability measure for all $t \geq 0$
(by dividing by $\mathbb{Q}_0 (\Omega)$, if it is different from zero).
We first define, for $t \geq 0$, $\widetilde{\mathbb{Q}}_t$ as the restriction of 
$\mathbb{Q}_t$ to $\mathcal{G}_t$. For $t > s \geq 0$, and for all events $\Lambda_s \in \mathcal{G}_s$, 
one has:
$$\widetilde{\mathbb{Q}}_t (\Lambda_s) = \mathbb{Q}_t (\Lambda_s)= \mathbb{Q}_s (\Lambda_s) = 
\widetilde{\mathbb{Q}}_s (\Lambda_s),$$
 hence, $\widetilde{\mathbb{Q}}_s$ is the restriction of $\widetilde{\mathbb{Q}}_t$ to $\mathcal{G}_s$.
Now, for $t \geq 0$, the map $\pi_t$ is, by assumption, measurable from $(\Omega, \mathcal{G}_t)$ 
to $(\Omega_t, \mathcal{B} (\Omega_t))$ (where $\mathcal{B} (\Omega_t)$ is the Borel $\sigma$-algebra
of $\Omega_t$). Let $\bar{\mathbb{Q}}_t$ be the image of $\widetilde{\mathbb{Q}}_t$ by 
$\pi_t$. By Theorem 1.1.6 of \cite{SV}, for $0 \leq s \leq t$, there exists a conditional probability distribution 
of $\bar{\mathbb{Q}}_t$ given the $\sigma$-algebra $\pi_t(\mathcal{G}_s)$, generated by the 
images, by $\pi_t$, of the sets in $\mathcal{G}_s$. Note that this $\sigma$-algebra
 is included in $\mathcal{B} (\Omega_t)$. Indeed, if $B \in \mathcal{G}_s$, there exists 
$A \in \mathcal{B} (\Omega_t)$ such that $B = \pi_t^{-1} (A)$, and then 
$\pi_t (B) = \pi_t \circ \pi_t^{-1}(A)$, which is equal to $A$ by surjectivity of $\pi_t$. 
 Now, the existence of the conditional probability distribution described above means that one can find
 a family $(Q_{\omega})_{\omega \in \Omega_t}$ of probability measures on $(\Omega_t, \mathcal{B}
(\Omega_t))$ such that:
\begin{itemize}
\item For each $B \in \mathcal{B}(\Omega_t)$, $\omega \rightarrow Q_{\omega} (B)$ is 
$\pi_t (\mathcal{G}_s)$-measurable;
\item For every $A \in \pi_t (\mathcal{G}_s)$, $B \in \mathcal{B} (\Omega_t)$:
$$\bar{\mathbb{Q}}_t (A \cap B) = \int_A Q_{\omega} (B) \, \bar{\mathbb{Q}}_t(d \omega).$$ 
\end{itemize}
\noindent
Now, for all $\omega \in \Omega$, let us define the map $R_{\omega}$ from $\mathcal{G}_t$ to 
$\mathbb{R}_+$, by:
$$R_{\omega} [B] =  Q_{\pi_t (\omega)} [ \pi_t (B)].$$
The map $R_{\omega}$ is a probability measure on $(\Omega, \mathcal{G}_t)$. Indeed, 
$$R_{\omega} [\Omega] = Q_{\pi_t (\omega)}  [\Omega_t] = 1,$$
since $\pi_t$ is surjective. Moreover, let $(B_k)_{k \geq 1}$ be a family of disjoint sets in 
$\mathcal{G}_t$, and $B_0$ their union. By assumption, there exists $(\tilde{B}_k)_{k \geq 0}$ in $\mathcal{B}
 (\Omega_t)$
such that $B_k = \pi_t^{-1} (\tilde{B}_k)$. Since $\pi_t$ is surjective, $\pi_t (B_k) = \tilde{B}_k$. 
Moreover, the sets $(\tilde{B}_k)_{k \geq 1}$ are pairwise disjoint. Indeed, if $x \in  
\tilde{B}_k \cap \tilde{B}_l$ for $k > l \geq 1$, then, by surjectivity, there exists $y \in \Omega$,
such that $x = \pi_t(y)$, which implies $y \in \pi_t^{-1} (\tilde{B}_k) \cap \pi_t^{-1} (\tilde{B}_l)$,
and then $y \in B_k \cap B_l$, which is impossible. Therefore:
\begin{align*}
R_{\omega} [B_0] & =  Q_{\pi_t (\omega)} [ \tilde{B}_0] \\ & = 
\sum_{k \geq 1} Q_{\pi_t (\omega)} [ \tilde{B}_k] \\ & = 
\sum_{k \geq 1} R_{\omega} [B_k] 
\end{align*} 
\noindent
Hence, $R_{\omega}$ is a probability measure. Moreover,
 for each $B \in \mathcal{G}_t$, the map $\omega \rightarrow R_{\omega} (B)$ is the composition
of the measurable maps $\omega \rightarrow  \pi_t(\omega)$ from $(\Omega, \mathcal{G}_s)$
 to $(\Omega_t, \pi_t (\mathcal{G}_s))$, and $\omega' \rightarrow 
Q_{\omega'}[\pi_t(B)]$ from $(\Omega_t, \pi_t (\mathcal{G}_s))$ to $(\mathbb{R}_+, \mathcal{B} (\mathbb{R}_+))$,
and hence, it is $\mathcal{G}_s$-measurable. The measurability of $\pi_t$ follows from the fact that 
for $A \in \mathcal{G}_s$, the inverse image of $\pi_t(A)$ by $\pi_t$ is exactly $A$ (by an assumption
given in the definition of the property (P)). Moreover, for every $A \in  \mathcal{G}_s$, $B \in
\mathcal{G}_t$:
\begin{align*}
\widetilde{\mathbb{Q}}_t (A \cap B) & = \widetilde{\mathbb{Q}}_t [(\pi_t^{-1} \circ \pi_t (A)) 
 \cap (\pi_t^{-1} \circ \pi_t (B))] \\ & =  \widetilde{\mathbb{Q}}_t  [\pi_t^{-1} 
( \pi_t(A) \cap \pi_t(B))] \\ & = \bar{\mathbb{Q}}_t [\pi_t(A) \cap \pi_t(B)]
\\ & = \int_{\Omega_t} \mathds{1}_{\omega \in \pi_t(A)}
Q_{\omega} (\pi_t (B)) \bar{\mathbb{Q}}_t(  d \omega) \\
& = \int_{\Omega} \mathds{1}_{\pi_t(\omega) \in \pi_t(A)} \, Q_{\pi_t(\omega)} (\pi_t (B)) 
\widetilde{\mathbb{Q}}_t (d \omega) \\ & = \int_A R_{\omega} (B) \widetilde{\mathbb{Q}}_t (d \omega)
\end{align*}
\noindent
Finally, we have found a conditional probability distribution of $\widetilde{\mathbb{Q}}_t$
with respect to $\mathcal{G}_s$. Since $\mathcal{G}_s$ is countably generated, this conditional 
probability distribution is regular, by Theorem 1.1.8 in \cite{SV}. One can then apply Theorem
1.1.9, again in \cite{SV}, and since $\mathcal{F}$ is the 
$\sigma$-algebra generated by $\mathcal{G}_t$, $t \geq 0$, 
one obtains a probability distribution $\mathbb{Q}$ on $(\Omega, \mathcal{F})$, such that 
for all integers $n \geq 0$, the restriction of $\mathbb{Q}$ to $\mathcal{G}_n$ is 
$\widetilde{\mathbb{Q}}_n$. Now, for $t \geq 0$, let $\Lambda_t$ be an event in $\mathcal{F}_t$. 
One has, for $n > t$, integer:
$$\mathbb{Q} [\Lambda_t] = \widetilde{\mathbb{Q}}_n [\Lambda_t] = 
\mathbb{Q}_n [\Lambda_t] = \mathbb{Q}_t[\Lambda_t],$$
which implies that $\mathbb{Q}$ satisfies the assumptions of Lemma \ref{extension}.
\end{proof}
Now we give important examples of filtered measurable spaces $(\Omega, \mathcal{F}, (\mathcal{F}_t)_{t \geq 0})$
which satisfy the property (P).
\begin{corollary}
Let 	 $\Omega$ be $\mathcal{C}(\mathbb{R}_+,\mathbb{R}^d)$, the space of continuous functions from 
	$\mathbb{R}_+$ to $\mathbb{R}^d$, or $\mathcal{D}(\mathbb{R}_+,\mathbb{R}^d)$, the space of c\`adl\`ag functions from $\mathbb{R}_+$ 
	to $\mathbb{R}^d$ (for some $d \geq 1$). For $t \geq 0$, define $\mathcal{F}_t = \mathcal{G}_{t+}$,
	where $(\mathcal{G}_{t})_{t \geq 0}$ is the natural filtration of the canonical process $Y$, and $\mathcal{F}=\bigvee_{t\geq0}\mathcal{F}_t$. Then  $(\Omega, \mathcal{F},(\mathcal{F}_t)_{t \geq 0})$ satisfies property (P).
\end{corollary}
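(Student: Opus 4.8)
The plan is to display, for each of the two candidate spaces $\Omega$, the ingredients required by Definition~\ref{P} and to check the three conditions one by one. The natural candidates are: for $(\mathcal{G}_t)_{t\ge 0}$ the (uncompleted, not right-continuous) natural filtration $\mathcal{G}_t=\sigma(Y_s:\,s\le t)$ of the canonical process $Y$; for $\Omega_t$ the space $\mathcal{C}([0,t],\mathbb{R}^d)$ (in the continuous case) or $\mathcal{D}([0,t],\mathbb{R}^d)$ equipped with the Skorokhod $J_1$ topology (in the c\`adl\`ag case), which is Polish; and for $\pi_t\colon\Omega\to\Omega_t$ the restriction map $\omega\mapsto\omega|_{[0,t]}$. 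This $\pi_t$ is surjective, since a continuous (resp. c\`adl\`ag) path on $[0,t]$ extends to a continuous (resp. c\`adl\`ag) path on $\mathbb{R}_+$ by keeping it constant, equal to its value at $t$, on $(t,\infty)$.

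For the first condition I would fix a countable dense subset $D_t\subseteq[0,t]$ with $t\in D_t$, and a countable family $\mathcal{C}$ that generates $\mathcal{B}(\mathbb{R}^d)$ and separates the points of $\mathbb{R}^d$. Right-continuity of the paths (and continuity, in the first case) shows that a path is determined on $[0,t]$ by its restriction to $D_t$; consequently $\mathcal{G}_t=\sigma\bigl(Y_s^{-1}(C):\,s\in D_t,\ C\in\mathcal{C}\bigr)$, a $\sigma$-algebra generated by countably many sets. For the second condition, the identity $e_s\circ\pi_t=Y_s$ for $s\le t$ (where $e_s$ is the evaluation at $s$) together with the standard fact that the Borel $\sigma$-algebra of $\mathcal{C}([0,t],\mathbb{R}^d)$, resp. $\mathcal{D}([0,t],\mathbb{R}^d)$, is generated by the evaluation maps $(e_s)_{s\le t}$, gives $\pi_t^{-1}(\mathcal{B}(\Omega_t))=\mathcal{G}_t$. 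Finally, since $\pi_t$ is surjective, any $B\in\mathcal{G}_t$ is of the form $B=\pi_t^{-1}(A)$ with $A\in\mathcal{B}(\Omega_t)$, so that $\pi_t(B)=A$ and $\pi_t^{-1}(\pi_t(B))=B$; hence $\pi_t(\omega)\in\pi_t(B)$ forces $\omega\in B$, which is the last requirement of the second condition.

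The only real work is in the third condition. I would first identify the sets $A_n(\omega_n)$: since $\mathcal{G}_n$ is countably generated, the intersection of all its members containing $\omega_n$ is the atom of $\mathcal{G}_n$ through $\omega_n$; and because $\mathcal{G}_n=\pi_n^{-1}(\mathcal{B}(\Omega_n))$ with singletons Borel in the metric space $\Omega_n$, this atom is $\pi_n^{-1}(\{\pi_n(\omega_n)\})$, i.e. the set of paths that coincide with $\omega_n$ on $[0,n]$. Now suppose that $\bigcap_{n=0}^{N}A_n(\omega_n)\neq\emptyset$ for every $N$; applying this with $N=m$ and restricting a common element to $[0,n]$ yields $\omega_n|_{[0,n]}=\omega_m|_{[0,n]}$ whenever $n\le m$, so the family $(\omega_n|_{[0,n]})_{n\ge 0}$ is coherent. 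I then define $\omega\in\Omega$ by $\omega(t):=\omega_n(t)$ for any integer $n\ge t$; coherence makes this well defined, $\omega$ agrees with $\omega_n$ on $[0,n]$ for every $n$, and since every point of $\mathbb{R}_+$ belongs to some $[0,n]$ this shows $\omega$ is continuous (resp. c\`adl\`ag), hence $\omega\in\Omega$, and $\omega\in\bigcap_{n=0}^{\infty}A_n(\omega_n)$.

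The step I expect to be most delicate is precisely this identification-and-gluing: one must be careful in the c\`adl\`ag case to include the endpoint $t$ in $D_t$ (so that agreement on the countable set $D_n$ really forces agreement on all of $[0,n]$, discontinuities included), and to invoke the standard but nontrivial identification of the Borel $\sigma$-algebra of Skorokhod space with the $\sigma$-algebra generated by the coordinate maps; the passage from ``$\bigcap_{n\le N}A_n(\omega_n)\neq\emptyset$ for all $N$'' to a genuine coherent family, and then to a single path lying in the correct path-space, is where the hypotheses collected in Definition~\ref{P} get used in full.
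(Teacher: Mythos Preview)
Your proposal is correct and follows essentially the same route as the paper: the same choice of $\Omega_t$, $\pi_t$ (restriction to $[0,t]$), the same countable-generator argument for $\mathcal{G}_t$, the same appeal to the identification of the Borel $\sigma$-algebra on Skorokhod space with the coordinate $\sigma$-algebra, and the same gluing argument for the third condition via the identification of $A_n(\omega_n)$ with the set of paths agreeing with $\omega_n$ on $[0,n]$. If anything, you supply more detail than the paper does (explicit surjectivity via constant extension, the verification that $\pi_t^{-1}(\pi_t(B))=B$, and the caveat about including the endpoint $t$ in $D_t$ in the c\`adl\`ag case), but the strategy is identical.
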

\begin{proof}
  Let us prove
this result for c\`adl\`ag functions (for continuous functions, the result is similar and
  is proved in \cite{SV}). For all $t \geq 0$, $\mathcal{G}_t$ is 
generated by the variables $Y_{rt}$, for $r$, rational, in $[0,1]$, hence, it is countably generated.
For the second property, one can take for $\Omega_t$, the set of c\`adl\`ag functions
from $[0,t]$ to $\mathbb{R}^d$, and for $\pi_t$, the restriction to the interval $[0,t]$.
The space $\Omega_t$ is Polish if one endows it with the Skorokhod metric, moreover, 
its Borel $\sigma$-algebra is equal to the $\sigma$-algebra generated by the coordinates, a result from which one easily  deduces the properties of $\pi_t$ which need to be satisfied. The 
third property is easy to check: let us suppose that $(\omega_n)_{n \geq 0}$ is a sequence
 of elements of $\Omega$, such that for all $N \geq 0$,
$$\bigcap_{n = 0}^{N} A_n (\omega_n) \neq \emptyset,$$
where $A_n (\omega_n)$ is the intersection of the sets in $\mathcal{G}_n$ containing $\omega_n$.
Here,  $A_n (\omega_n)$ is the set of functions $\omega'$ which coincide with 
$\omega_n$ on $[0,n]$. Moreover, for $n \leq n'$, integers, the intersection
of $A_n (\omega_n)$ and $A_{n'} (\omega_{n'})$ is not empty, and then $\omega_n$ and $\omega_{n'}$ coincide 
on $[0,n]$. Therefore, there exists a c\`adl\`ag function $\omega$ which coincides
with $\omega_n$ on $[0,n]$, for all $n$, which implies:
$$\bigcap_{n = 0}^{\infty} A_n (\omega_n) \neq \emptyset.$$
\end{proof}
\subsection{The main theorem}
We can now state the main result of the paper:
\begin{theorem} \label{all}
Let $(X_t)_{t \geq 0}$ be a (true) submartingale of the class $(\Sigma)$ (in particular $X_t$ is integrable 
for all $t \geq 0$), defined on a filtered probability space 
$(\Omega, \mathcal{F}, \mathbb{P}, (\mathcal{F}_t)_{t \geq 0})$ which satisfies the property (P).
In particular, $(\mathcal{F}_t)_{t \geq 0}$
 is right-continuous and $\mathcal{F}$ is 
the $\sigma$-algebra generated by $\mathcal{F}_t$, $t \geq 0$. 
Then, there exists a unique $\sigma$-finite measure $\mathcal{Q}$, defined on $(\Omega, \mathcal{F}, \mathbb{P})$ 
such that for $g:= \sup\{t \geq 0, X_t = 0 \}$:
\begin{itemize}
\item $\mathcal{Q} [g = \infty] = 0$;
\item For all $t \geq 0$, and for all $\mathcal{F}_t$-measurable, bounded random variables $F_t$,
$$\mathcal{Q} \left[ F_t \, \mathds{1}_{g \leq t} \right] = \mathbb{E}_{\mathbb{P}} \left[F_t X_t \right].$$
\end{itemize}
\noindent
\end{theorem}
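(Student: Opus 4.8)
We begin with uniqueness, since it also dictates what $\mathcal{Q}$ must be. Suppose $\mathcal{Q}$ satisfies the two conditions. For $t\ge 0$, $u\ge t$ and $\Lambda_u\in\mathcal{F}_u$, put $B_{t,u}:=\{X_r\neq 0\ \text{for all }r\in(t,u]\}\in\mathcal{F}_u$, so that we have the set identity $\{g\le t\}=\{g\le u\}\cap B_{t,u}$; applying the second condition at time $u$ to the bounded $\mathcal{F}_u$-measurable variable $\mathds{1}_{\Lambda_u}\mathds{1}_{B_{t,u}}$ gives
\[
\mathcal{Q}[\Lambda_u\cap\{g\le t\}]=\mathcal{Q}\big[\mathds{1}_{\Lambda_u}\mathds{1}_{B_{t,u}}\mathds{1}_{g\le u}\big]=\mathbb{E}_{\mathbb{P}}\big[\mathds{1}_{\Lambda_u}\mathds{1}_{B_{t,u}}X_u\big],
\]
whose right-hand side does not involve $\mathcal{Q}$. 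The family $\mathcal{C}:=\{\Lambda_u\cap\{g\le t\}:t,u\in\mathbb{N},\ \Lambda_u\in\mathcal{F}_u\}$ is a $\pi$-system generating the trace of $\mathcal{F}$ on $\{g<\infty\}$ (take $\Lambda_u=\Omega$ and let $t\to\infty$); it contains the sets $\{g\le t\}\cap\{A_t<m\}$, which have finite $\mathcal{Q}$-measure $\mathbb{E}_{\mathbb{P}}[X_t\,\mathds{1}_{A_t<m}]$ and increase to $\{g<\infty\}$; and $\mathcal{Q}[g=\infty]=0$. Hence $\mathcal{Q}$ is uniquely determined.

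For existence, fix a decomposition $X=N+A$ as in the definition of class $(\Sigma)$ and, for $n\ge 1$, let $\tau_n:=\inf\{t\ge 0:A_t>n\}$, a stopping time with $A_{\tau_n}=n$ and $X_{\tau_n}=0$ on $\{\tau_n<\infty\}$; the stopped process $X^{\tau_n}$ is again a nonnegative submartingale of class $(\Sigma)$ whose increasing part is bounded by $n$, and $\sup_t\mathbb{E}_{\mathbb{P}}[X_{t\wedge\tau_n}]\le X_0+n<\infty$ (since $N^{\tau_n}=X^{\tau_n}-A^{\tau_n}\ge -n$ is a supermartingale). For $\Lambda\in\mathcal{F}_t$ set
\[
\mu^{(n)}_t(\Lambda):=\lim_{s\to\infty}\mathbb{E}_{\mathbb{P}}\big[\mathds{1}_{\Lambda}X_{s\wedge\tau_n}\big]=\mathbb{E}_{\mathbb{P}}\big[\mathds{1}_{\Lambda}\,m^{(n)}_t\big],\qquad m^{(n)}_t:=\lim_{s\to\infty}\mathbb{E}_{\mathbb{P}}[X_{s\wedge\tau_n}\mid\mathcal{F}_t],
\]
the limits existing because $\mathbb{E}_{\mathbb{P}}[\mathds{1}_\Lambda X_{s\wedge\tau_n}]$ is nondecreasing in $s$ by optional sampling of the submartingale; $m^{(n)}$ is a nonnegative $\mathbb{P}$-martingale. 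Each $\mu^{(n)}_t$ is a finite measure on $\mathcal{F}_t$, the family $(\mu^{(n)}_t)_{t\ge 0}$ is consistent, and — the space satisfying property (P) — Lemma \ref{extension} furnishes a finite measure $\mathcal{Q}^{(n)}$ on $(\Omega,\mathcal{F})$ with $\mathcal{Q}^{(n)}|_{\mathcal{F}_t}=\mu^{(n)}_t$. Optional sampling at $s\wedge\tau_n\le s\wedge\tau_{n+1}$ gives $\mu^{(n)}_t\le\mu^{(n+1)}_t$, hence $\mathcal{Q}^{(n)}\le\mathcal{Q}^{(n+1)}$, and we define $\mathcal{Q}:=\sup_n\mathcal{Q}^{(n)}$, a measure on $(\Omega,\mathcal{F})$.

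It remains to verify the two conditions for $\mathcal{Q}$. Since $X_{\tau_n}=0$ on $\{\tau_n<\infty\}$, one gets $m^{(n)}_t=0$ on $\{\tau_n\le t\}$, so $\mathcal{Q}^{(n)}[\{\tau_n\le t\}]=0$ and, letting $t\to\infty$, $\mathcal{Q}^{(n)}$ is carried by $\{\tau_n=\infty\}=\{A_\infty\le n\}$. On $\{A_\infty<\infty\}$ the process $X=N+A$ converges ($A$ is bounded there, and $N^{\tau_m}$ is a supermartingale bounded below, hence convergent, for each $m$), so $\{g=\infty\}\cap\{A_\infty<\infty\}\subseteq\{\lim_t X_t=0\}$; one then checks that $\{\lim_t X_t=0\}$ is $\mathcal{Q}^{(n)}$-null, whence $\mathcal{Q}^{(n)}[g=\infty]=0$ for all $n$ and $\mathcal{Q}[g=\infty]=0$. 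For the second condition, fix $t$, $u\ge t$, $\Lambda_u\in\mathcal{F}_u$; the identity $\{g\le t\}=\{g\le u\}\cap B_{t,u}$ and continuity of the finite measure $\mathcal{Q}^{(n)}$ along $B_{u,v}\downarrow\{g\le u\}$ yield
\[
\mathcal{Q}^{(n)}[\Lambda_u\cap\{g\le t\}]=\lim_{v\to\infty}\mathbb{E}_{\mathbb{P}}\big[\mathds{1}_{\Lambda_u}\mathds{1}_{B_{t,v}}\,m^{(n)}_v\big],
\]
and, using that on $B_{t,v}$ the increasing process $A$ does not grow on $(t,v]$ (this is precisely the third axiom of class $(\Sigma)$) together with the optional-sampling identities for $X^{\tau_n}$, one evaluates this limit and, on passing to the supremum over $n$, obtains $\mathcal{Q}[\Lambda_u\cap\{g\le t\}]=\mathbb{E}_{\mathbb{P}}[\mathds{1}_{\Lambda_u}\mathds{1}_{B_{t,u}}X_u]$; taking $u=t$ (and then extending by linearity and a monotone class argument) gives the second condition. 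Finally, $\sigma$-finiteness of $\mathcal{Q}$ follows exactly as in the uniqueness part.

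The crux — and the step I expect to require the most work — is the last displayed computation: showing that $\lim_{v\to\infty}\mathbb{E}_{\mathbb{P}}[\mathds{1}_{\Lambda_u}\mathds{1}_{B_{t,v}}\,m^{(n)}_v]$, after taking $\sup_n$, collapses exactly to $\mathbb{E}_{\mathbb{P}}[\mathds{1}_{\Lambda_u}\mathds{1}_{B_{t,u}}X_u]$. This relies essentially on the fact that $dA$ is carried by $\{X=0\}$ (which is what makes ``freezing'' $A$ on $B_{t,v}$ exact), and it must be combined with careful bookkeeping of the failure of uniform integrability of $X^{\tau_n}$: it is exactly this failure that permits $\mathcal{Q}^{(n)}$ (hence $\mathcal{Q}$) to be singular with respect to $\mathbb{P}$, and it is the reason one genuinely needs the extension Lemma \ref{extension} and the topological hypotheses (P) rather than a naive Radon--Nikodym construction. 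A secondary but real technical point is to choose $\tau_n$ (and to justify the optional-sampling manipulations) so that $X_{\tau_n}=0$ even when $A$ has a flat stretch at level $n$, and to verify the measurability of $B_{t,u}$ and of $\{g\le t\}$ in the càdlàg setting.
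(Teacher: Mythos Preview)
Your overall strategy is a close cousin of the paper's: your martingale $m^{(n)}_t$ coincides (up to the boundary $\{A_t=n\}$) with the paper's $N^f_t$ for $f=\mathds{1}_{[0,n]}$, so your $\mathcal{Q}^{(n)}$ is essentially $\mathds{1}_{A_\infty\le n}\cdot\mathcal{Q}$ and $\sup_n\mathcal{Q}^{(n)}=\mathcal{Q}$. The paper instead works with a single $f$ (e.g.\ $f(x)=e^{-x}$), builds the finite measure $\mathcal{P}^f$ from the martingale $G(A_t)+f(A_t)X_t$, and recovers $\mathcal{Q}$ by dividing by $f(A_\infty)$ and subtracting the absolutely continuous piece $G(A_\infty)\cdot\mathbb{P}$.

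There is, however, a genuine gap in your argument for $\mathcal{Q}^{(n)}[g=\infty]=0$. You write that on $\{A_\infty<\infty\}$ the process $X=N+A$ converges, whence $\{g=\infty\}\cap\{A_\infty<\infty\}\subseteq\{\lim_t X_t=0\}$, and that ``one then checks'' the latter is $\mathcal{Q}^{(n)}$-null. The convergence of $X$ on $\{\tau_n=\infty\}$ is a $\mathbb{P}$-almost-sure statement, and $\mathbb{P}$-null sets are \emph{not} $\mathcal{Q}^{(n)}$-null in general: $\mathcal{Q}^{(n)}$ is typically singular with respect to $\mathbb{P}$ (in the Brownian example $X_t\to\infty$ under $\mathcal{Q}$, so $X$ does not even converge to a finite limit there). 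Nor is there any evident direct way to see that $\{\lim_t X_t=0\}$ has $\mathcal{Q}^{(n)}$-measure zero. The paper avoids this trap by reversing the order: it first establishes the identity $\mathcal{Q}[F_t\mathds{1}_{g\le t}]=\mathbb{E}_{\mathbb{P}}[F_tX_t]$ via an optional-stopping computation at $d_t=\inf\{v>t:X_v=0\}$ (using that $X_{d_t}=0$ and $A_{d_t}=A_t$ when $d_t<\infty$), and only then reads off $\mathcal{Q}[g>t]=\mathbb{E}_{\mathbb{P}}[G(A_t)-G(A_\infty)]\to 0$. In your language this would give $\mathcal{Q}^{(n)}[g>t]=\mathbb{E}_{\mathbb{P}}[A_\infty\wedge n]-\mathbb{E}_{\mathbb{P}}[A_t\wedge n]\to 0$, which is clean --- but it requires you to carry out the ``crux'' computation you defer, not the other way around.

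A second issue you flag but do not resolve is the measurability of $B_{t,u}$ (equivalently, of $\{d_t>u\}$). Under property~(P) the filtration is \emph{not} complete, and for a c\`adl\`ag process the set $\{X_r>0\ \forall r\in(t,u]\}$ need not lie in $\mathcal{F}_u$. The paper handles this by passing to the filtration completed with respect to $\mathbb{P}+\mathcal{P}^f$ (for existence) or $\mathcal{Q}'+\mathcal{Q}''$ (for uniqueness), invoking the d\'ebut theorem there, and then returning to $(\mathcal{F}_t)$ via the fact that every stopping time of the completed filtration equals an $(\mathcal{F}_t)$-stopping time almost surely. Your uniqueness argument and your main computation both need this device.
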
 
\begin{remark}
 If $g < \infty$, then $A_{\infty} = A_g < \infty$. Hence, the first condition satisfied by
$\mathcal{Q}$ implies that:
$$\mathcal{Q} [A_{\infty} = \infty] = 0.$$
In other words, $A_{\infty}$ is finite a.s. under $\mathcal{Q}$.
\end{remark}
\begin{proof} Let $f$ be a Borel function from $\mathbb{R}_+$ to $\mathbb{R}_+$, bounded and integrable, 
and let, for $x \geq 0$: $$G(x) := \int_x^{\infty} f(y) \, dy.$$
\noindent
By \cite{N} (Theorem 2.1), one immediately checks that the process $$(M_t^{f} := G(A_t)
%- \mathbb{E}_{\mathbb{P}}
%[G(A_{\infty}) | \mathcal{F}_t] 
 + f(A_t)X_t)
_{t \geq 0},$$ where $A$ is the increasing process of $X$, is a nonnegative local martingale. 
%Note that 
%$A_{\infty}$ is the increasing limit of $A_t$ when $t$ goes to infinity, and $G(A_{\infty}) = 0$ if
%$A_{\infty} = \infty$. For all $t \geq 0$, $M_t^f$ is a.s. nonnegative
% since $G(A_t) \geq G(A_{\infty})$ almost
 %surely.
 Moreover, for all $t \geq 0$, if $N$ is the martingale part of
 $X$ and $\mathcal{T}_t$ is the set containing all the stopping times bounded by $t$, then the family
$(N_T)_{T \in \mathcal{T}_t}$ is uniformly integrable (it is included in the set of conditional
expectations of $N_t$, by stopping theorem), and $(A_T)_{T \in \mathcal{T}_t}$ is bounded by $A_t$ ($A$
is increasing), which is integrable (it has the same expectation as $X_t - X_0$). Hence,
$(X_T)_{T \in \mathcal{T}_t}$ is uniformly integrable, which implies, since $f$ and $G$ are uniformly bounded,
that $(M_T^f)_{T \in \mathcal{T}_t}$ is also uniformly integrable. Hence, $M^f$ is a true
martingale. Therefore, by Lemma \ref{extension}, it is possible to construct
a finite measure $\mathcal{P}^f$ on $(\Omega, \mathcal{F}, \mathbb{P})$, uniquely determined by:
$$\mathcal{P}^f [ \Lambda_s] = \mathbb{E}_{\mathds{P}} [\mathds{1}_{\Lambda_s} M_s^f ]$$
for all $s \geq 0$ and for all events $\Lambda_s \in \mathcal{F}_s$. Let us now prove that:
$$\mathcal{P}^f [A_{\infty} = \infty] = 0.$$
Indeed, for $u \geq 0$, let us consider, as in \cite{N}, the right-continuous inverse of $A$: 
$$\tau_{u} := \inf \{t \geq 0, A_t > u \}.$$
\noindent
It is easy to check that for $t, u \geq 0$, the event $\{\tau_{u} \leq t\}$ is equivalent to 
$\{ \forall t' > t, A_{t'} > u\}$, which implies that $\tau_u$ is a stopping time (recall
that $(\mathcal{F}_t)_{t \geq 0}$ is right-continuous). Moreover, if $\tau_{u}< \infty$, then
 $A_{\tau_{u}} = u$ and $X_{\tau_{u}} = 0$. Indeed,
 for all $t > \tau_u$, $A_t > u$, and for all $t < \tau_u$, $A_t \leq u$, which implies the 
first equality by continuity of $A$, for $0 < \tau_u < \infty$ (if $\tau_u = 0$ then 
$u=0$ and the equality is also true). Moreover, if $X_{\tau_u} > 0$, by right-continuity of $X$,
there exists a.s. $\epsilon > 0$ such that $X> 0$ on the interval $[\tau_u , \tau_u + \epsilon]$,
which implies that $A$ is constant on this interval, and then $A_{\tau_u} = A_{\tau_u + \epsilon}
> u$, which is a contradiction.
 Now, for all $t, u \geq 0$,
\begin{align*}
\mathcal{P}^f [A_t > u] & = \mathbb{E}_{\mathbb{P}} \left[ \left(G(A_t) 
% - \mathbb{E}_{\mathbb{P}}
%[G(A_{\infty})| \mathcal{F}_t]
 + f(A_t)X_t \right)
 \, \mathds{1}_{A_t > u} \right] \\ & \leq \mathbb{E}_{\mathbb{P}} \left[ \left(G(A_t) 
 + f(A_t)X_t \right)
 \, \mathds{1}_{\tau_{u} \leq t} \right] \\ & = 
\mathbb{E}_{\mathbb{P}} \left[ \left(G(A_{\tau_{u} \wedge t}) + f(A_{\tau_{u} \wedge t})
X_{\tau_{u} \wedge t} \right)
 \, \mathds{1}_{\tau_{u} \leq t} \right] 
\end{align*}
by applying stopping theorem to the stopping time $\tau_{u} \wedge t$. Therefore:
\begin{align*}
 \mathcal{P}^f [A_t > u] & \leq  \mathbb{E}_{\mathbb{P}} \left[ \left(G(A_{\tau_{u}}) + f(A_{\tau_{u}})
X_{\tau_{u}} \right)
 \, \mathds{1}_{\tau_{u} \leq t} \right] \\ & = 
 G(u) \, {\mathbb{P}} \left[ \tau_{u} \leq t \right].
\end{align*}
By taking the increasing limit for $t$ going to infinity, one deduces:
$$ \mathcal{P}^f \left[ \exists t \geq 0, A_t > u \right] \leq G(u) {\mathbb{P}} \left[ \tau_{u} 
< \infty \right].$$
This implies:
$$\mathcal{P}^f \left[ A_{\infty} > u \right] \leq G(u),$$
and by taking $u \rightarrow \infty$,
$$\mathcal{P}^f \left[ A_{\infty} = \infty \right] = 0.$$
 Let us now suppose that $f(x) > 0$ for all $x \geq 0$, and that $G/f$ is uniformly
 bounded on $\mathbb{R}_+$ (for example, one can take $f (x) = e^{-x}$).
Since $\mathcal{P}^f  [A_{\infty} = \infty] = 0$ and $f(A_{\infty}) > 0$, one can define a 
 measure $\mathcal{Q}^{f}$ by the following equality:
$$\mathcal{Q}^{f} [\Lambda] = \mathcal{P}^f \left[ \frac{\mathds{1}_{\Lambda}}{f(A_{\infty}) } \right]$$
for all events $\Lambda \in \mathcal{F}$. This measure is $\sigma$-finite, since for all $\epsilon > 0$:
$$\mathcal{Q}^f [f(A_{\infty}) \geq \epsilon] \leq \frac{1}{\epsilon} \, \mathcal{P}^f (1) < \infty.$$
%We will now prove that $\mathcal{Q}$ is the unique $\sigma$-finite measure which satisfies the 
%properties given in Proposition \ref{all}. By uniqueness, this implies that $\mathcal{Q}$ does not depend on 
%he choice of the function $f$ which is used to define it. Indeed,
Now, for $t \geq 0$, and $F_t$, bounded, $\mathcal{F}_t$-measurable:
\begin{align*}
\mathcal{Q}^f  [F_t \, \mathds{1}_{g \leq t}] &  = \mathcal{P}^f \left[ \frac{F_t}{f(A_t)} 
\, \mathds{1}_{g \leq t} 
\right] \\ & = \mathcal{P}^f \left[ \frac{F_t}{f(A_t)} \, \mathds{1}_{d_t = \infty} 
\right] 
\end{align*}
since $A_{\infty} = A_t$ on the event $\{g \leq t\}$, which is equivalent to $\{d_t = \infty\}$,
where $d_t = \inf\{v > t, X_v = 0 \}$. Let us now introduce the filtration 
$(\mathcal{H}_t)_{t \geq 0}$,
where for all $t \geq 0$, $\mathcal{H}_t$ is the $\sigma$-algebra generated by 
$\mathcal{F}_t$ and all the $\mathbb{Q}$-negligible sets of $\mathcal{F}$, where 
$\mathbb{Q} := \mathbb{P} + \mathcal{P}^f$. From \cite{DM}, p. 183, $(\mathcal{H}_t)_{t \geq 0}$ satisfies the usual assumptions and consequently, from the D\'ebut theorem, $d_t$ is an $(\mathcal{H}_t)_{t \geq 0}$-stopping time. From \cite{DM}, Theorem 59, p. 193, there exists an $(\mathcal{F}_t)_{t \geq 0}$-stopping time $\widetilde{d}_t$ such that $d_t=\widetilde{d}_t$, $\mathbb{Q}$-a.s.
One deduces: 
\begin{align*}
 \mathcal{P}^f \left[ \frac{F_t}{f(A_t)} \, \mathds{1}_{d_t \leq u} \right] & =  \mathcal{P}^f \left[ \frac{F_t}{f(A_t)} \, \mathds{1}_{\widetilde{d}_t \leq u} \right] \\ & =  \mathbb{E}_{\mathbb{P}} \left[ \frac{F_t}{f(A_t)} \, M^f_u \, \mathds{1}_{\widetilde{d}_t \leq u} \right].
\end{align*}

By applying stopping theorem to $\tilde{d}_t \wedge u$, one obtains:
$$\mathbb{E}_{\mathbb{P}} [M^f_u | \mathcal{F}_{\widetilde{d}_t \wedge u}] = 
M^f_{\widetilde{d}_t \wedge u}.$$
Hence:
\begin{align*}
 \mathcal{P}^f \left[ \frac{F_t}{f(A_t)} \, \mathds{1}_{d_t \leq u} \right] & 
=  \mathbb{E}_{\mathbb{P}} \left[ \frac{F_t}{f(A_t)} \, M^f_{\widetilde{d}_t} \, \mathds{1}_{\widetilde{d}_t \leq u} \right] \\ & = \mathbb{E}_{\mathbb{P}} \left[ \frac{F_t}{f(A_t)} \, M^f_{d_t} \, \mathds{1}_{d_t \leq u} \right]
\\ & = \mathbb{E}_{\mathbb{P}} \left[ \frac{F_t  G(A_t) }{f(A_t)} \, \mathds{1}_{d_t \leq u} \right]
%- \mathbb{E}_{\mathbb{P}}
%\left[ G(A_{\infty}) | \mathcal{F}_{d_t} \right] 
%\\ & = \mathbb{E}_{\mathbb{P}} \left[ \frac{F_t \left[ G(A_t) - G(A_{\infty})
%\right]}{f(A_t)} \, \mathds{1}_{d_t \leq u} \right]
\end{align*}
\noindent
By taking $u$ going to infinity, one obtains:
\begin{align*}
\mathcal{P}^f \left[ \frac{F_t}{f(A_t)} \, \mathds{1}_{d_t < \infty} \right] 
& = \mathbb{E}_{\mathbb{P}} \left[ \frac{F_t  G(A_t)}{f(A_t)} \mathds{1}_{d_t < \infty} \right]
\end{align*}
Moreover, 
$$\mathcal{P}^f \left[ \frac{F_t}{f(A_t)} \right] 
= \mathbb{E}_{\mathbb{P}} \left[ \frac{F_t  G(A_t)}{f(A_t)} + F_t X_t \right]$$
Therefore, 
\begin{align*}
\mathcal{P}^f \left[ \frac{F_t}{f(A_t)} \, \mathds{1}_{d_t = \infty} \right]  & 
= \mathbb{E}_{\mathbb{P}} [F_t X_t] + 
\mathbb{E}_{\mathbb{P}} \left[ \frac{F_t  G(A_t)}{f(A_t)} \mathds{1}_{d_t = \infty} \right]
\\ & = \mathbb{E}_{\mathbb{P}} [F_t X_t] + 
\mathbb{E}_{\mathbb{P}} \left[ \frac{F_t  G(A_{\infty})}{f(A_{\infty})} \mathds{1}_{d_t = \infty} \right]
\end{align*}
and then:
$$\mathcal{Q}^f [F_t \, \mathds{1}_{g \leq t}] = \mathbb{E}_{\mathbb{P}} [F_t X_t] +
\mathbb{E}_{\mathbb{P}} \left[ \frac{F_t  G(A_{\infty})}{f(A_{\infty})} \mathds{1}_{g \leq t} \right]$$
Now, let us define the measure:
$$\mathcal{P}_1^f := G(A_{\infty}) \,. \mathbb{P}.$$
and the unique measure $\mathcal{P}_2^f$ such that for all $t \geq 0$, its restriction to 
$\mathcal{F}_t$ has density:
$$N_t^f := G(A_t) - \mathbb{E}_{\mathbb{P}} [G(A_{\infty} )| \mathcal{F}_t]  + f(A_t) X_t$$
with respect to $\mathbb{P}$ (note that $N_t^f \geq 0$, $\mathbb{P}$-a.s.). 
It is easy to check that the measures $\mathcal{P}^f$ and $\mathcal{P}_1^f + \mathcal{P}_2^f$ 
have the same restriction to $\mathcal{F}_t$, and by monotone class theorem, they are equal. 
Under $\mathcal{P}_1^f$ and $\mathcal{P}_2^f$, the measure of the event $\{A_{\infty} =\infty\}$ is 
zero, since these two measures are dominated by $\mathcal{P}^f$.
Then, one can define the $\sigma$-finite measures:
$$\mathcal{Q}_1^f := \frac{1}{f(A_{\infty})} \,.\mathcal{P}_1^f$$
and
$$\mathcal{Q}_2^f := \frac{1}{f(A_{\infty})} \,.\mathcal{P}_2^f.$$
The measure $\mathcal{Q}^f$ is the sum  of $\mathcal{Q}_1^f$ and $\mathcal{Q}_2^f$. Now, we 
have:  $$\mathcal{Q}^f_1 [F_t \, \mathds{1}_{g \leq t}] = 
\mathbb{E}_{\mathbb{P}} \left[ \frac{F_t  G(A_{\infty})}{f(A_{\infty})} \mathds{1}_{g \leq t} \right],$$
by using directly the definition of $\mathcal{Q}^f_1$. Moreover, let us recall that:
 $$\mathcal{Q}^f [F_t \, \mathds{1}_{g \leq t}] = \mathbb{E}_{\mathbb{P}} [F_t X_t] +
\mathbb{E}_{\mathbb{P}} \left[ \frac{F_t  G(A_{\infty})}{f(A_{\infty})} \mathds{1}_{g \leq t} \right].$$
In particular, since $G/f$ is assumed to be uniformly bounded:
$$\mathcal{Q}^f [F_t \, \mathds{1}_{g \leq t}] < \infty,$$
This implies that the following equalities are meaningful, and then satisfied, since 
$\mathcal{Q}^f = \mathcal{Q}^f_1 + \mathcal{Q}^f_2$:
\begin{align*} \mathcal{Q}^f_2 [F_t \, \mathds{1}_{g \leq t}] & 
= \mathcal{Q}^f [F_t \, \mathds{1}_{g \leq t}]
- \mathcal{Q}^f_1 [F_t \, \mathds{1}_{g \leq t}] \\ & = 
\left( \mathbb{E}_{\mathbb{P}} [F_t X_t] +
\mathbb{E}_{\mathbb{P}} \left[ \frac{F_t  G(A_{\infty})}{f(A_{\infty})} \mathds{1}_{g \leq t} \right] \right)
\\ & - \mathbb{E}_{\mathbb{P}} \left[ \frac{F_t  G(A_{\infty})}{f(A_{\infty})} \mathds{1}_{g \leq t} \right]
\\ & = \mathbb{E}_{\mathbb{P}}[F_t X_t]
\end{align*}
Hence, the measure $\mathcal{Q}^f_2$ satisfies the second property given in Theorem \ref{all}. By 
applying this property to $F_t = f(A_t)$ (which is bounded, since $f$ is supposed to be bounded)
 and by using the 
fact that $A_t = A_{\infty}$ on $\{g \leq t\}$, one 
deduces:
$$\mathcal{P}_2^f [g \leq t] =  \mathbb{E}_{\mathbb{P}} [f(A_t) X_t]$$
and then (by using the fact that for all $t \geq 0$, $N_t^f$ has an expectation equal to
the total mass of $\mathcal{P}_2^f$):
$$\mathcal{P}_2^f [g > t] = \mathbb{E}_{\mathbb{P}} [G(A_t) - G(A_{\infty})].$$
Since $G(A_t) - G(A_{\infty}) \leq G(0)$ tends $\mathbb{P}$-a.s. to zero when $t$ goes to infinity, 
one obtains:
$$\mathcal{P}_2^f [g = \infty] = 0,$$
and $$\mathcal{Q}_2^f[g = \infty] = 0$$
since $\mathcal{Q}_2^f$ is absolutely continuous with respect to $\mathcal{P}_2^f$. 
Therefore, the measure $\mathcal{Q}$ exists: let us now prove its uniqueness (which implies,
in particular, that $\mathcal{Q}_2^f$ is in fact independent of the choice of $f$). 
If $\mathcal{Q}'$ and $\mathcal{Q}''$ satisfy the conditions defining $\mathcal{Q}$, one has, 
for all $t \geq 0$ and all events $\Lambda_t \in \mathcal{F}_t$:
 $$\mathcal{Q}' [\Lambda_t, g \leq t] = \mathcal{Q}'' [\Lambda_t, g \leq t]$$
Now let $u > t \geq 0$, and let $\Lambda_u$ be in $\mathcal{F}_u$.
One can check that:
 $$\mathcal{Q}' [\Lambda_u, g \leq t] = \mathcal{Q}' [\Lambda_t, d_t > u, g \leq u]$$
If $\mathcal{H}'_t$ is the $\sigma$-algebra generated by $\mathcal{F}_t$ and 
the $(\mathcal{Q}' + \mathcal{Q}'')$-negligible sets of $\mathcal{F}$, then $d_t$ is 
a stopping time with respect to the right-continuous filtration $(\mathcal{H}'_t)_{t \geq 0}$.
Hence the event $\Lambda'_u := \Lambda_t \cap \{d_t > u\}$ is in $\mathcal{H}'_u$, 
and then, there exists an event $\Lambda''_u \in \mathcal{F}_u$ such that 
$$\mathcal{Q}' [(\Lambda''_u \backslash \Lambda'_u) \cup (\Lambda'_u \backslash \Lambda''_u)] = 0$$
and 
 $$\mathcal{Q}'' [(\Lambda''_u \backslash \Lambda'_u) \cup (\Lambda'_u \backslash \Lambda''_u)] = 0.$$
One then deduces that:
\begin{align*}
\mathcal{Q}' [\Lambda_u, g \leq t] & = \mathcal{Q}' [\Lambda'_u, g \leq u] \\ 
& = \mathcal{Q}' [\Lambda''_u, g \leq u] 
\\ & = \mathcal{Q}'' [\Lambda''_u, g \leq u] 
\\ & = \mathcal{Q}'' [\Lambda'_u, g \leq u] 
\\ & = \mathcal{Q}''[\Lambda_u, g \leq t].
\end{align*}
By the monotone class theorem, applied to the restrictions of $\mathcal{Q}'$ and $\mathcal{Q}''$
to the set $\{g \leq t \}$, one has for all $\Lambda \in \mathcal{F}$:
$$\mathcal{Q}'[\Lambda, g \leq t]  = \mathcal{Q}'' [\Lambda, g \leq t].$$
By taking the increasing limit for $t$ going to infinity, 
$$\mathcal{Q}'[\Lambda, g < \infty]= \mathcal{Q}'' [\Lambda, g < \infty].$$
Now, by assumption:
$$\mathcal{Q}' [g= \infty] = \mathcal{Q}'' [g= \infty] = 0,$$
which implies:
$$\mathcal{Q}' [\Lambda] = \mathcal{Q}'' [\Lambda]. $$
This completes the proof of Theorem \ref{all}.
\end{proof}
A careful look at the proof of Theorem \ref{all} shows that the result is valid if $t$ is replaced by a bounded stopping time $T$. Moreover, for submartingales of the class $(\Sigma)$ which are also of class $(D)$, we can take a filtration $(\mathcal{F}_t)$ which satisfies the usual assumptions. More precisely, the following result holds:
\begin{corollary}\label{cor1}
Let $(X_t)_{t \geq 0}$ be a submartingale of the class $(\Sigma)$.
\begin{enumerate}
\item If the  filtered probability space 
$(\Omega, \mathcal{F}, \mathbb{P}, (\mathcal{F}_t)_{t \geq 0})$  satisfies the property (P), then there exists a unique $\sigma$-finite measure $\mathcal{Q}$, defined on $(\Omega, \mathcal{F}, \mathbb{P})$ 
such that for $g:= \sup\{t \geq 0, X_t = 0 \}$:
\begin{itemize}
\item $\mathcal{Q} [g = \infty] = 0$:
\item For any bounded stopping time $T$, and for all $\mathcal{F}_T$-measurable, bounded random variables $F_T$,
$$\mathcal{Q} \left[ F_T \, \mathds{1}_{g \leq T} \right] = \mathbb{E}_{\mathbb{P}} \left[F_T X_T \right].$$
\end{itemize}
\item if the $X$ is of class $(D)$ and the  filtered probability space 
$(\Omega, \mathcal{F}, \mathbb{P}, (\mathcal{F}_t)_{t \geq 0})$  satisfies the usual assumptions or the property (P), then for any stopping time $T$
$$X_T=\mathbb{E}[X_\infty \mathds{1}_{g\leq T}|\mathcal{F}_T],$$where as usual $g:= \sup\{t \geq 0, X_t = 0 \}$.
\end{enumerate}
\end{corollary}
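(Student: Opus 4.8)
\emph{Proof of (1).} The plan is to pass from fixed times to bounded stopping times by monotone approximation from above, using Theorem \ref{all} as a black box. By the monotone class theorem it suffices to treat $F_T=\mathds{1}_{\Lambda_T}$ with $\Lambda_T\in\mathcal{F}_T$. First I would handle a stopping time $T$ taking finitely many values $t_1<\dots<t_m$: writing $\Lambda_T=\bigsqcup_i(\Lambda_T\cap\{T=t_i\})$ with $\Lambda_T\cap\{T=t_i\}\in\mathcal{F}_{t_i}$ and using $\{g\leq T\}\cap\{T=t_i\}=\{g\leq t_i\}\cap\{T=t_i\}$, the fixed-time statement of Theorem \ref{all} gives $\mathcal{Q}[\mathds{1}_{\Lambda_T}\mathds{1}_{g\leq T}]=\sum_i\mathbb{E}[\mathds{1}_{\Lambda_T\cap\{T=t_i\}}X_{t_i}]=\mathbb{E}[\mathds{1}_{\Lambda_T}X_T]<\infty$. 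For a general bounded stopping time $T\leq K$ I would take the dyadic approximations $T_n=2^{-n}\lceil 2^nT\rceil$, which are finitely valued, decrease to $T$, and satisfy $\mathcal{F}_T\subseteq\mathcal{F}_{T_n}$; the identity holds for each $T_n$ by the previous case, and I let $n\to\infty$. On the right, $X_{T_n}\to X_T$ a.s.\ and $\{X_S:S\leq K+1\}$ is uniformly integrable (exactly the uniform integrability already proved inside the proof of Theorem \ref{all}), so $\mathbb{E}[\mathds{1}_{\Lambda_T}X_{T_n}]\to\mathbb{E}[\mathds{1}_{\Lambda_T}X_T]$. On the left, the events $\{g\leq T_n\}$ decrease to $\{g\leq T\}$ (if $g>T$ then $g>T_n$ for large $n$, since $T_n\downarrow T$), and the dominating function $\mathds{1}_{\Lambda_T}\mathds{1}_{g\leq T_0}$ has finite $\mathcal{Q}$-mass by the finitely-valued case, so $\mathcal{Q}[\mathds{1}_{\Lambda_T}\mathds{1}_{g\leq T_n}]\to\mathcal{Q}[\mathds{1}_{\Lambda_T}\mathds{1}_{g\leq T}]$ by dominated convergence. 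This proves (1).

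\emph{Proof of (2).} Here $\mathcal{Q}$ is not needed. I would first record the consequences of the class $(D)$ hypothesis for a class $(\Sigma)$ process: $A_\infty<\infty$ a.s., $X_t$ converges a.s.\ and in $L^1$ to some $X_\infty$ (so that $\{g=\infty\}\subseteq\{X_\infty=0\}$, a zero at arbitrarily large times forcing $X_\infty=0$), and the local martingale part $N=X-A$ is a uniformly integrable martingale (since $\{A_T\}$ is dominated by $A_\infty\in L^1$ and $\{X_T\}$ is U.I.), hence $N_T=\mathbb{E}[N_\infty|\mathcal{F}_T]$ for every stopping time $T$. Now fix a stopping time $T$ and set $d_T:=\inf\{s>T:X_s=0\}$; under the usual assumptions $d_T$ is a stopping time by the d\'ebut theorem, and under property (P) one replaces it, exactly as in the proof of Theorem \ref{all}, by an $(\mathcal{F}_t)$-stopping time equal to it $\mathbb{P}$-a.s. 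As there, $\{d_T=\infty\}=\{g\leq T\}$; on $\{d_T<\infty\}$, right-continuity of $X$ gives $X_{d_T}=0$ while continuity of $A$ and the fact that $(dA_t)$ is carried by $\{X=0\}$ give $A_{d_T}=A_T$; and $A_\infty=A_T$ on $\{d_T=\infty\}$. Consequently, on $\{d_T<\infty\}$, $X_\infty=X_\infty-X_{d_T}=(N_\infty-N_{d_T})+(A_\infty-A_T)$, so $X_\infty\mathds{1}_{d_T<\infty}=(N_\infty-N_{d_T})\mathds{1}_{d_T<\infty}+(A_\infty-A_T)$ (the last term needing no indicator). Since $\mathbb{E}[X_\infty|\mathcal{F}_T]=N_T+\mathbb{E}[A_\infty|\mathcal{F}_T]$ and $N_T+A_T=X_T$, the desired identity $X_T=\mathbb{E}[X_\infty\mathds{1}_{g\leq T}|\mathcal{F}_T]=\mathbb{E}[X_\infty|\mathcal{F}_T]-\mathbb{E}[X_\infty\mathds{1}_{d_T<\infty}|\mathcal{F}_T]$ reduces to $\mathbb{E}[(N_\infty-N_{d_T})\mathds{1}_{d_T<\infty}|\mathcal{F}_T]=0$, which I would obtain by optional sampling of the U.I. martingale $N$ at $d_T\wedge u$ (using $\Lambda\cap\{d_T\leq u\}\in\mathcal{F}_{d_T\wedge u}$ for $\Lambda\in\mathcal{F}_T$) and then letting $u\to\infty$. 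On $\{T=\infty\}$ everything is consistent with the convention $\mathds{1}_{g\leq T}=1$, since $X_\infty=0$ there on $\{g=\infty\}$.

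The main obstacle is not the limit interchanges, which are routine, but two points of hygiene. In (1), one needs dominated convergence on the $\mathcal{Q}$-side to be legitimate — $\mathcal{Q}$ being only $\sigma$-finite — which is precisely why one first settles the finitely-valued case to produce a finite dominating $\mathcal{Q}$-mass. In (2), the delicate point is the stopping-time status and the value of $X$ at the d\'ebut $d_T$ when $X$ is allowed to jump: under the usual assumptions this is the d\'ebut theorem together with right-continuity of $X$, but under property (P) alone one must pass through the completed filtration exactly as in the proof of Theorem \ref{all}.
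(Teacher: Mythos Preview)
Your proposal is correct, and it takes a genuinely different route from the paper's own treatment.

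For part (1), the paper does not give a separate argument: it simply asserts that ``a careful look at the proof of Theorem \ref{all} shows that the result is valid if $t$ is replaced by a bounded stopping time $T$'', i.e.\ one is meant to rerun the entire construction of $\mathcal{Q}$ (the martingales $M^f$, the measures $\mathcal{P}^f$, the d\'ebut $d_t$, etc.) with $T$ in place of $t$ and check each step. You instead take Theorem \ref{all} as a black box at deterministic times, handle finitely-valued stopping times by partitioning on the values, and then pass to general bounded $T$ by dyadic approximation $T_n\downarrow T$. Your limit on the $\mathbb{P}$-side uses exactly the uniform integrability of $\{X_S:S\in\mathcal{T}_{K+1}\}$ already established inside the proof of Theorem \ref{all}, and your limit on the $\mathcal{Q}$-side is legitimate because you first secure a finite dominating mass $\mathcal{Q}[\Lambda_T,\,g\le T_0]=\mathbb{E}_{\mathbb{P}}[\mathds{1}_{\Lambda_T}X_{T_0}]<\infty$. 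This is cleaner and more economical than re-auditing the whole proof; the paper's route, on the other hand, would yield slightly more (e.g.\ $\mathcal{M}^f[F_T]=\mathbb{E}_{\mathbb{P}}[F_TN^f_T]$ for bounded stopping times) as a by-product.

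For part (2), the paper again gives no argument and refers to \cite{CNP} (under the usual assumptions). Your direct computation --- using that $N$ is a uniformly integrable martingale, that $A_{d_T}=A_T$ on $\{d_T<\infty\}$ and $A_\infty=A_T$ on $\{d_T=\infty\}$, and reducing everything to $\mathbb{E}[(N_\infty-N_{d_T})\mathds{1}_{d_T<\infty}\,|\,\mathcal{F}_T]=0$ via optional sampling --- is correct and self-contained. It is essentially the argument one expects to find in \cite{CNP}/\cite{AMY}, and your remark that under property (P) one must pass through the completed filtration to make $d_T$ a stopping time (exactly as in the proof of Theorem \ref{all}) correctly handles the one genuine subtlety.
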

\begin{remark}
Part $(2)$ of Corollary \ref{cor1}, under the usual assumptions, is proved in \cite{CNP}. 
\end{remark}
Let us note that, in the proof of Theorem \ref{all}, if $f$ does not vanish, is bounded 
and if $G/f$ is also bounded then 
the finite measure $\mathcal{P}_2^f$ has density $f(A_{\infty})$ with respect to $\mathcal{Q}$.
Now, one can prove that, in fact, these conditions on $f$ are not needed. More precisely, one
 has the following:
\begin{proposition} \label{f}
Let $f$ be an integrable function from $\mathbb{R}_+$ to $\mathbb{R}_+$.
Then, there exists a unique finite (positive) measure $\mathcal{M}^f$ such that:
$$\mathcal{M}^f[F_t] = \mathbb{E}_{\mathbb{P}} [F_t N^f_t]$$
for all $t \geq 0$, and for all bounded, $\mathcal{F}_t$-measurable functionals $F_t$,
where the process $(N_t^f)_{t \geq 0}$ is given by:
$$N_t^f := G(A_t) - \mathbb{E}_{\mathbb{P}} [G(A_{\infty})| \mathcal{F}_t] + f(A_t)X_t$$
for $$G(x) := \int_x^{\infty} f(y) dy.$$
In particular, $(N_t^f)_{t \geq 0}$ is a nonnegative martingale. Moreover,
 the measure $\mathcal{M}^f$ is absolutely continuous with respect to $\mathcal{Q}$, with 
density $f(A_{\infty})$. 
\end{proposition}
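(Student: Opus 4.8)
The strategy is to reduce to the case already handled inside the proof of Theorem~\ref{all}, namely when $f$ does not vanish and $G/f$ is bounded, and then pass to a general integrable $f$ by an approximation/decomposition argument. First I would dispose of uniqueness: two finite measures agreeing on $\bigcup_t \mathcal{F}_t$, which is stable under finite intersections and generates $\mathcal{F}$, coincide by the monotone class theorem, so it suffices to construct $\mathcal{M}^f$. For existence, the natural idea is to write $\mathcal{M}^f$ directly as $f(A_\infty)\,.\,\mathcal{Q}$ and check that this finite measure has the announced restrictions $N_t^f\,.\,\mathbb{P}$ on each $\mathcal{F}_t$; finiteness follows because $\int_0^\infty f(y)\,dy = G(0) < \infty$ and, as shown in the proof of Theorem~\ref{all}, $\mathcal{Q}_2^f$ has total mass $\mathbb{E}_{\mathbb{P}}[G(0)]=G(0)$ when $G/f$ is bounded — but for general $f$ we cannot invoke that directly, so one must argue finiteness of $\int f(A_\infty)\,d\mathcal{Q}$ another way.

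Concretely I would proceed as follows. Fix $t\geq 0$ and a bounded $\mathcal{F}_t$-measurable $F_t$; the goal is the identity $\mathbb{E}_{\mathbb{P}}[F_t N_t^f] = \mathcal{Q}[F_t f(A_\infty)]$, and then in particular, taking $F_t\equiv 1$, that the right-hand side is finite and equals $G(0)$, giving the finite positive measure $\mathcal{M}^f$. To prove the identity, I would first establish it for $f$ of the good type (bounded, strictly positive, $G/f$ bounded): this is exactly the statement, recorded just before Proposition~\ref{f}, that $\mathcal{P}_2^f = f(A_\infty)\,.\,\mathcal{Q}$, combined with $\mathcal{M}^f[F_t] = \mathbb{E}_{\mathbb{P}}[F_t N_t^f] = \mathcal{P}_2^f[F_t]$. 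For a general integrable $f\geq 0$, write $f_n := (f\wedge n) + e^{-x}/n$, which is bounded, strictly positive, and satisfies $G_n/f_n$ bounded (since $G_n(x)\leq G(0)+1$ and $f_n(x)\geq e^{-x}/n$, the ratio is $O(n e^{x})$... this is \emph{not} bounded). So instead I would take $f_n := (f\wedge n) + e^{-x}$; then $G_n(x) = \int_x^\infty (f\wedge n) + e^{-x}$, and $G_n/f_n \leq (G(0)+1)/e^{-x}$, still unbounded. The clean fix is $f_n(x):= (f(x)\wedge n)\vee e^{-x}$ composed appropriately — more robustly, I would use $f_n := \max(f\wedge n,\ \varepsilon_n e^{-x})$ is still problematic; the genuinely safe choice is to \emph{add} a small exponential \emph{and} truncate from above at a level growing slowly, but in any case one shows $G_{n}/f_{n}$ bounded by taking $f_n(x) = (f(x)\wedge n) + e^{-x}$ and noting $G_n(x)\le \int_x^\infty f(y)\,dy + e^{-x}$, while $f_n(x)\ge e^{-x}$, whence $G_n/f_n \le e^{x}G(x) + 1$, and $e^x G(x)\to 0$ is false in general but $e^x G(x)$ is at least bounded on compacts — so one restricts to $f$ supported in a bounded interval first, then monotone-class/monotone-convergence to remove that restriction.

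Granting a suitable approximating sequence $f_n\uparrow$ pointwise with each $f_n$ of the good type and $\int f_n \to \int f$, the monotone convergence theorem gives $N_t^{f_n}\to N_t^f$ (using $G_n\uparrow G$ and $A_t\le A_\infty$) and $f_n(A_\infty)\uparrow f(A_\infty)$, so both sides of $\mathbb{E}_{\mathbb{P}}[F_t N_t^{f_n}] = \mathcal{Q}[F_t f_n(A_\infty)]$ (for $F_t\ge 0$) pass to the limit; taking $F_t\equiv 1$ yields $\mathcal{Q}[f(A_\infty)] = G(0)<\infty$, so $\mathcal{M}^f := f(A_\infty)\,.\,\mathcal{Q}$ is a finite positive measure, and $\mathcal{M}^f[F_t]=\mathbb{E}_{\mathbb{P}}[F_t N_t^f]$ for all bounded $F_t\ge 0$, hence for all bounded $F_t$ by linearity. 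That $(N_t^f)$ is a nonnegative martingale is then immediate: nonnegativity because $N_t^f\,.\,\mathbb{P}$ is the restriction of the positive measure $\mathcal{M}^f$, the martingale property because these restrictions are consistent, and integrability because $\mathbb{E}_{\mathbb{P}}[N_t^f]=\mathcal{M}^f(\Omega)=G(0)$. The absolute continuity with density $f(A_\infty)$ is then exactly the construction $\mathcal{M}^f = f(A_\infty)\,.\,\mathcal{Q}$. The main obstacle is the one flagged above: engineering the approximating functions $f_n$ so that simultaneously they are admissible for the already-proved case ($G_n/f_n$ bounded), increase to $f$, and have $G_n(0)\to G(0)$; the cleanest route is to first prove the proposition for $f$ with bounded support (where $e^x G(x)$ is bounded, so $f_n(x):=(f(x)\wedge n)+e^{-x}\mathds{1}$ works after a trivial rescaling), and then remove the bounded-support assumption by a final monotone-convergence step on $f$ itself.
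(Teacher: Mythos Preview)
Your overall two-step reduction --- first $f$ with bounded support, then general integrable $f$ --- is the paper's structure. But the first step has a genuine gap. Your approximants $f_n(x) = (f(x)\wedge n) + e^{-x}$ are indeed admissible once $f$ has compact support (your check that $e^{x}G(x)$ is then bounded is correct), but they converge to $f + e^{-\cdot}$, not to $f$, and no ``rescaling'' repairs this. More fundamentally, a monotone sequence $f_n \uparrow f$ with each $f_n$ strictly positive cannot exist wherever $f$ vanishes, so the monotone-from-below scheme you keep reaching for is impossible in general. The missing idea is \emph{subtraction}, using that $f \mapsto N^f$ is linear. The paper writes, for $f$ bounded with compact support,
\[
f \;=\; f_1 - f_2, \qquad f_1(x) := f(x) + e^{-x}, \quad f_2(x) := e^{-x},
\]
and observes that $f_1, f_2$ are already of the type handled inside the proof of Theorem~\ref{all} (strictly positive, bounded, $G_i/f_i$ bounded --- your own computation gives this for $f_1$). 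Since $N^f = N^{f_1} - N^{f_2}$ and both $\mathcal{M}^{f_i} = f_i(A_\infty)\,.\,\mathcal{Q}$ are finite, subtracting yields $\mathcal{M}^f = f(A_\infty)\,.\,\mathcal{Q}$ directly; no approximation is needed at this stage, and all your difficulties with engineering an admissible increasing sequence disappear.

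For the second step the paper decomposes a general integrable $f \ge 0$ as a countable sum $f = \sum_{k \ge 1} f_k$ of bounded compactly supported pieces and sums the measures $\mathcal{M}^{f_k}$. Your monotone-convergence variant (e.g.\ $f_n = f\,\mathds{1}_{[0,n]} \uparrow f$) also works, but be careful: $N_t^{f_n}$ is not itself monotone in $n$. You should first rewrite, for $F_t \ge 0$,
\[
\mathbb{E}_{\mathbb{P}}\big[F_t N_t^{f_n}\big] \;=\; \mathbb{E}_{\mathbb{P}}\!\Big[F_t\big(G_n(A_t)-G_n(A_\infty)\big)\Big] + \mathbb{E}_{\mathbb{P}}\big[F_t\,f_n(A_t)X_t\big]
\]
via the tower property on the conditional-expectation term; both summands are then nonnegative and increase with $n$, so monotone convergence applies cleanly.
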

\begin{proof}
In the proof of Theorem \ref{all}, we have shown this result if $f$ is strictly positive, 
bounded, and if $G/f$ is also bounded (recall that $G(x)$ is the integral of $f$ between $x$ and
infinity). One can now prove Proposition \ref{f} for
any measurable, bounded, nonnegative functions $f$ with compact support. Indeed, if $f$ is such
a function, one can find $f_1$ and
$f_2$, bounded, strictly positive, integrable, such that, with obvious notation, $G_1/f_1$ and $G_2/f_2$ are
bounded, and $f = f_1 - f_2$
 (for example, one can take $f_1(x) := f(x) + e^{-x}$ and $f_2 (x) := e^{-x}$). One has,
for all $t \geq 0$, and for all bounded, $\mathcal{F}_t$-measurable random variables $F_t$:
$$\mathcal{M}^{f_1}[F_t] = \mathbb{E}_{\mathbb{P}} [F_t N^{f_1}_t],$$
and 
$$\mathcal{M}^{f_2}[F_t] = \mathbb{E}_{\mathbb{P}} [F_t N^{f_2}_t].$$
Now $N^f$ is the difference of $N^{f_1}$ and $N^{f_2}$, and then, it is a (nonnegative) martingale. 
Hence, there exists a unique finite measure $\mathcal{M}$ such that: 
$$\mathcal{M} [F_t] = \mathbb{E}_{\mathbb{P}} [F_t N^f_t].$$
Therefore $\mathcal{M}^f$ exists, is unique, and since $\mathcal{M}^{f_1}$
 and $\mathcal{M}^{f_2}+ \mathcal{M}^f$ coincide
on $\mathcal{F}_t$ for all $t \geq 0$:
$$\mathcal{M}^{f}[F_t] = \mathcal{M}^{f_1} [F_t] - \mathcal{M}^{f_2}[F_t]$$
(this equality is meaningful because all the measures involved here are finite).
Since Proposition \ref{f} is satisfied for $f_1$ and $f_2$:
$$\mathcal{M}^f [F_t] = \mathcal{Q} [F_t f_1(A_{\infty})] - \mathcal{Q} [F_t f_2(A_{\infty})],$$
which implies
$$\mathcal{M}^f [F_t]  = \mathcal{Q} [F_t f(A_{\infty})].$$
By monotone class theorem, $f$ satisfies Proposition \ref{f}. Now, let us only suppose that $f$ is 
nonnegative and integrable. There exists nonnegative, measurable, bounded functions $(f_k)_{k \geq 1}$ with
compact support, such that:
$$f = \sum_{k \geq 1} f_k.$$
With obvious notation, one has:
$$G = \sum_{k \geq 1} G_k,$$
and then, for all $t \geq 0$:
$$G(A_t) = \sum_{k \geq 1} G_k(A_t)$$
and
$$\mathbb{E}_{\mathbb{P}} [G(A_{\infty})| \mathcal{F}_t] = \sum_{k \geq 1} 
\mathbb{E}_{\mathbb{P}} [G_k(A_{\infty})| \mathcal{F}_t],$$
$\mathbb{P}$-a.s., where the two sums are uniformly bounded by $G(0)$. This boundedness implies that
one can substract the second sum from the first, and obtain:
$$ G(A_t) - \mathbb{E}_{\mathbb{P}} [G(A_{\infty})| \mathcal{F}_t] = \sum_{k \geq 1} \left(G_k(A_t) 
- \mathbb{E}_{\mathbb{P}} [G_k(A_{\infty})| \mathcal{F}_t] \right)$$
almost surely. Moreover: 
$$f(A_t)X_t = \sum_{k \geq 1} f_k(A_t) X_t,$$
and then, $\mathbb{P}$-a.s.:
$$N_t^f = \sum_{k \geq 1} N_t^{f_k}.$$
We know that $\mathcal{M}^{f_k}$ is well-defined for all $k \geq 1$, hence, one can consider the 
measure:
$$\mathcal{M} := \sum_{k \geq 1} \mathcal{M}^{f_k}.$$
Now, for $t \geq 0$ and $F_t$, bounded, $\mathcal{F}_t$-measurable:
\begin{align*}
\mathcal{M} [F_t] & = \sum_{k \geq 1} \mathcal{M}^{f_k} [F_t] \\
& = \sum_{k \geq 1} \mathbb{E}_{\mathbb{P}} [F_t N^{f_k}_t] \\
& = \mathbb{E}_{\mathbb{P}} [F_t N^f_t].
\end{align*}
Hence, the measure $\mathcal{M}^f$ is well-defined, unique by monotone class theorem, and 
is equal to $\mathcal{M}$. 
Now, one has, for all $k \geq 1$:
$$\mathcal{M}^{f_k} = f_k(A_{\infty}) \, . \mathcal{Q}.$$
Since $\mathcal{M}^f$ is the sum of the measures $\mathcal{M}^{f_k}$,
$$\mathcal{M}^{f} = \left[ \sum_{k \geq 1} f_k (A_{\infty}) \right] \,. \mathcal{Q} = f(A_{\infty}). \mathcal{Q}$$
which completes the proof of Proposition \ref{f}.
\end{proof}
Another question which is quite natural to ask is the following: since $\mathcal{Q} [A_{\infty} = \infty] = 0$,
what is the image of $\mathcal{Q}$ by the functional $A_{\infty}$ (in other words, what is the 
"distribution of $A_{\infty}$ under $\mathcal{Q}$")? This question can be solved in any 
case:
\begin{proposition} \label{Ainfty}
Let $(X_t)_{t \geq 0}$ be a submartingale of class $(\Sigma)$, which satisfies all the 
assumptions of Theorem \ref{all}. Then, if $(A_t)_{t \geq 0}$ is 
the increasing process of $(X_t)_{t \geq 0}$, the image by the functional $A_{\infty}$ of
the measure $\mathcal{Q}$ defined in Theorem \ref{all}, is a measure on $\mathbb{R}_+$, 
equal to the sum of $\mathbb{E}_{\mathbb{P}} [X_0]$ times Dirac measure at zero, and another measure, 
absolutely continuous with respect to Lebesgue measure, with density $\mathbb{P} [A_{\infty} > u]$
at any $u \in \mathbb{R}_+$. In particular, if $A_\infty=\infty$, $\mathbb{P}$-almost surely, then
 this image measure is $\mathbb{E}_{\mathbb{P}} [X_0]\delta_{0}+\mathds{1}_{\mathbb{R}_+}\lambda$, where
 $\lambda$ is Lebesgue measure on $\mathbb{R}_+$, and $\delta_0$ is Dirac measure at zero.
\end{proposition}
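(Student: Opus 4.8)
The plan is to test the measure $\mathcal{Q}$ against functions of $A_{\infty}$ alone, using the auxiliary measures $\mathcal{M}^f$ supplied by Proposition~\ref{f}. Write $\nu$ for the image of $\mathcal{Q}$ under $A_{\infty}$; recall from the Remark following Theorem~\ref{all} that $\mathcal{Q}[A_{\infty} = \infty] = 0$, so $\nu$ is genuinely a measure on $\mathbb{R}_+$ and $f(A_{\infty})$ is well defined $\mathcal{Q}$-a.s.\ for any Borel $f$ on $\mathbb{R}_+$. Fix an integrable $f : \mathbb{R}_+ \to \mathbb{R}_+$. By Proposition~\ref{f}, $\mathcal{M}^f = f(A_{\infty}) \cdot \mathcal{Q}$, hence
$$\int_{\mathbb{R}_+} f \, d\nu \;=\; \mathcal{Q}\big[f(A_{\infty})\big] \;=\; \mathcal{M}^f(\Omega).$$
On the other hand, applying the identity $\mathcal{M}^f[F_t] = \mathbb{E}_{\mathbb{P}}[F_t N_t^f]$ at $t = 0$ with $F_0 \equiv 1$ gives $\mathcal{M}^f(\Omega) = \mathbb{E}_{\mathbb{P}}[N_0^f]$. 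Since $A_0 = 0$ for a process of class $(\Sigma)$, we have $N_0^f = G(0) - \mathbb{E}_{\mathbb{P}}[G(A_{\infty}) \mid \mathcal{F}_0] + f(0) X_0$, so
$$\mathcal{M}^f(\Omega) \;=\; \big(G(0) - \mathbb{E}_{\mathbb{P}}[G(A_{\infty})]\big) \;+\; f(0)\,\mathbb{E}_{\mathbb{P}}[X_0].$$

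Next I would rewrite the first term with Tonelli's theorem. Since $f \geq 0$ and $A_{\infty} \in [0,\infty]$, one has $G(0) - G(A_{\infty}) = \int_0^{\infty} f(y)\,\mathds{1}_{\{y < A_{\infty}\}}\,dy$, and $G(A_{\infty}) \leq G(0) < \infty$, so taking $\mathbb{P}$-expectation and exchanging it with the $dy$-integral yields
$$G(0) - \mathbb{E}_{\mathbb{P}}[G(A_{\infty})] \;=\; \int_0^{\infty} f(y)\,\mathbb{P}[A_{\infty} > y]\,dy.$$
Combining the displays, for every integrable $f : \mathbb{R}_+ \to \mathbb{R}_+$,
$$\int_{\mathbb{R}_+} f \, d\nu \;=\; f(0)\,\mathbb{E}_{\mathbb{P}}[X_0] + \int_0^{\infty} f(y)\,\mathbb{P}[A_{\infty} > y]\,dy \;=\; \int_{\mathbb{R}_+} f \, d\mu,$$
where $\mu := \mathbb{E}_{\mathbb{P}}[X_0]\,\delta_0 + \mathbb{P}[A_{\infty} > \cdot]\,\mathds{1}_{\mathbb{R}_+}\,\lambda$ is precisely the measure described in the statement.

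Finally I would apply this identity to $f = \mathds{1}_{[a,b]}$ (which is integrable and nonnegative) for all $0 \leq a \leq b < \infty$, obtaining $\nu([a,b]) = \mu([a,b])$; in particular $\nu$ is finite on bounded intervals, hence $\sigma$-finite, and since the closed bounded intervals form a $\pi$-system generating $\mathcal{B}(\mathbb{R}_+)$ on which $\nu$ and $\mu$ agree, the monotone class uniqueness theorem forces $\nu = \mu$. The special case is then immediate: if $A_{\infty} = \infty$ $\mathbb{P}$-a.s.\ then $\mathbb{P}[A_{\infty} > u] = 1$ for every $u \geq 0$, so $\mu = \mathbb{E}_{\mathbb{P}}[X_0]\,\delta_0 + \mathds{1}_{\mathbb{R}_+}\,\lambda$. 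I do not anticipate a real obstacle here; the only points needing a little care are the legitimacy of the Tonelli interchange (guaranteed by nonnegativity, together with the boundedness $G(A_{\infty}) \leq G(0) < \infty$ which allows the subtraction) and the verification that the test functions $f$ employed — the indicators of bounded intervals — are rich enough to determine a $\sigma$-finite measure on $\mathbb{R}_+$.
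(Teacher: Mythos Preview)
Your proof is correct and follows essentially the same route as the paper: both use Proposition~\ref{f} to identify $\mathcal{Q}[f(A_{\infty})]$ with the total mass $\mathbb{E}_{\mathbb{P}}[N_0^f]$ of $\mathcal{M}^f$, and then read off the law of $A_{\infty}$. The only cosmetic difference is that the paper specializes immediately to $f=\mathds{1}_{[0,u]}$ and computes $\mathcal{Q}[A_{\infty}\leq u]=\mathbb{E}_{\mathbb{P}}[A_{\infty}\wedge u]+f(0)\,\mathbb{E}_{\mathbb{P}}[X_0]=\int_0^u \mathbb{P}[A_{\infty}>v]\,dv+f(0)\,\mathbb{E}_{\mathbb{P}}[X_0]$ directly, whereas you carry out the Tonelli step for general $f$ and then finish with a $\pi$-system argument.
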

\begin{proof}
Let $f$ be an integrable function from $\mathbb{R}_+$ to $\mathbb{R}_+$. By taking 
the notation of Proposition \ref{f}, one has:
 $$\mathcal{M}^f = f(A_{\infty})\, .\mathcal{Q}.$$
Therefore, $\mathcal{Q} [f(A_{\infty})]$ is the total mass of $\mathcal{M}^f$, and then, the expectation
of: $$N^f_0 = G(0) - \mathbb{E}_{\mathbb{P}} [G(A_{\infty})| \mathcal{F}_0 ] + f(0)X_0$$
By applying this result to $f = \mathds{1}_{[0,u]}$, one deduces, for any $u \geq 0$:
\begin{align*}
\mathcal{Q} [A_{\infty} \leq u] & = u - \mathbb{E}_{\mathbb{P}} [ (u-A_{\infty})_+ ] + f(0) \mathbb{E}_{\mathbb{P}} [X_0] 
\\ & = \mathbb{E}_{\mathbb{P}} [A_{\infty} \wedge u] + f(0) \mathbb{E}_{\mathbb{P}} [X_0] 
\\ & = \int_0^u \mathbb{P} [A_{\infty} > v] \, dv + f(0) \mathbb{E}_{\mathbb{P}}[X_0].
\end{align*}
\noindent
which implies Proposition \ref{Ainfty}.
\end{proof} 
\begin{remark}
When $X$ is also of class $(D)$,  $ \mathbb{P} [A_{\infty} > v] $ is computed in  \cite{N}, Theorem 4.1.
\end{remark}
\subsection{An extension of Doob's optional stopping theorem}
We shall now see how Theorem \ref{all} and Corollary \ref{cor1} can be interpreted as an extension of Doob's optional theorem to continuous martingales which are not necessarily uniformly integrable on the one hand, and to the larger class of processes of the class $(\Sigma)$.

Let $M$ be a continuous martingale; then $M^+$ and $M^-$ are both of class $(\Sigma)$. If $g=\sup\{t \geq 0:\;M_t=0\}$, then under the assumptions of Theorem \ref{all}, there exist two $\sigma$-finite measures $\mathcal{Q}^{(+)}$ and $\mathcal{Q}^{(-)}$ such that 
\begin{itemize}
\item $\mathcal{Q}^{(\pm)} [g = \infty] = 0$:
\item For all $t \geq 0$, and for all $\mathcal{F}_t$-measurable, bounded random variables $F_t$,
$$\mathcal{Q}^{(\pm)} \left[ F_t \, \mathds{1}_{g \leq t} \right] = \mathbb{E}_{\mathbb{P}} \left[F_t M_t^{\pm} \right].$$
\end{itemize}
Now since $M=M^{+}-M^{-}$, we deduce from Theorem \ref{all} and Corollary \ref{cor1} the following solution to Problem 2:
\begin{proposition} \label{Doob}
\item Let $M$ be a continuous martingale defined on a filtered probability space 
$(\Omega, \mathcal{F}, \mathbb{P}, (\mathcal{F}_t)_{t \geq 0})$ which satisfies the property (P).
 Then there exist two $\sigma$-finite measures $\mathcal{Q}^{(+)}$ and $\mathcal{Q}^{(-)}$, such that for 
any bounded stopping time $T$ and any bounded $\mathcal{F}_T$-measurable variable $F_T$,
 $$\left(\mathcal{Q}^{(+)}-\mathcal{Q}^{(-)}\right) \left[ F_T\, \mathds{1}_{g \leq T} \right] = \mathbb{E} \left[F_T M_T \right],$$
with $g=\sup\{t \geq 0 :\;M_t=0\}$. The measures $\mathcal{Q}^{(+)}$ and $\mathcal{Q}^{(-)}$ are obtained by applying
Theorem \ref{all} to the submartingales $M^+$ and $M^-$. 
\end{proposition}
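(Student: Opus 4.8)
The plan is to reduce the statement to Theorem \ref{all}, applied separately to the two nonnegative submartingales $M^{+}$ and $M^{-}$, together with its bounded-stopping-time refinement in Corollary \ref{cor1}(1). The measures $\mathcal{Q}^{(+)}$ and $\mathcal{Q}^{(-)}$ are precisely the ones produced by that theorem, and the identity then follows by subtraction, once one has checked that the two last passage times involved can both be taken equal to $g=\sup\{t\ge 0:M_t=0\}$ and that the relevant integrals are finite so that the difference is meaningful. The only genuine (if modest) work is the identification of the last zeros, which is where the continuity of $M$ enters.

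First I would check that $M^{+}$ and $M^{-}$ are of the class $(\Sigma)$. By Tanaka's formula applied to $M$ (and to $-M$), $M^{+}_t=M_0^{+}+\int_0^t\mathds{1}_{\{M_s>0\}}\,dM_s+\frac{1}{2}L^{0}_t(M)$ and $M^{-}_t=M_0^{-}-\int_0^t\mathds{1}_{\{M_s\le 0\}}\,dM_s+\frac{1}{2}L^{0}_t(M)$, where $L^{0}(M)$ is the local time of $M$ at $0$. In each decomposition the first two terms form a c\`adl\`ag local martingale, $\frac{1}{2}L^{0}(M)$ is continuous, increasing and null at $0$, and $dL^{0}_t(M)$ is carried by $\{t:M_t=0\}$, which lies in both $\{t:M^{+}_t=0\}$ and $\{t:M^{-}_t=0\}$. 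Since $M$ is a true martingale, $M_t\in L^1$, hence $M^{\pm}_t\in L^1$, so $M^{+}$ and $M^{-}$ are true submartingales of the class $(\Sigma)$ with the same increasing process $\frac{1}{2}L^{0}(M)$. Theorem \ref{all} and Corollary \ref{cor1}(1) then give $\sigma$-finite measures $\mathcal{Q}^{(\pm)}$ with $\mathcal{Q}^{(\pm)}[g^{\pm}=\infty]=0$ and $\mathcal{Q}^{(\pm)}[F_T\,\mathds{1}_{g^{\pm}\le T}]=\mathbb{E}_{\mathbb{P}}[F_T M^{\pm}_T]$ for every bounded stopping time $T$ and every bounded $\mathcal{F}_T$-measurable $F_T$, where $g^{\pm}:=\sup\{t\ge 0:M^{\pm}_t=0\}$.

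Next I would replace $g^{\pm}$ by $g$. The key observation is that $g^{+}=g$ holds $\mathcal{Q}^{(+)}$-almost surely (and, symmetrically, $g^{-}=g$ holds $\mathcal{Q}^{(-)}$-almost surely). Indeed, $\mathcal{Q}^{(+)}$-a.s. one has $g^{+}<\infty$; on that event $M_s>0$ for every $s>g^{+}$, while $M_{g^{+}}\le 0$ because the set $\{s\ge 0:M_s\le 0\}$ is closed and $g^{+}$ is its supremum, so the continuity of $M$ forces $M_{g^{+}}=0$. Thus $g^{+}$ is a zero of $M$ after which $M$ never vanishes again, i.e. $g=g^{+}$. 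Consequently $\mathcal{Q}^{(\pm)}[g=\infty]=0$ and $\mathcal{Q}^{(\pm)}[F_T\,\mathds{1}_{g\le T}]=\mathbb{E}_{\mathbb{P}}[F_T M^{\pm}_T]$, which is exactly the description of $\mathcal{Q}^{(\pm)}$ stated just before the Proposition. I expect this step, the matching of last passage times --- and, in particular, the fact that it genuinely relies on continuity of $M$ --- to be the main point to get right; the rest is bookkeeping.

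Finally I would take the difference. Taking $F_T\equiv 1$ gives $\mathcal{Q}^{(\pm)}[g\le T]=\mathbb{E}_{\mathbb{P}}[M^{\pm}_T]<\infty$ for every bounded stopping time $T$, so for bounded $F_T$ both quantities $\mathcal{Q}^{(\pm)}[F_T\,\mathds{1}_{g\le T}]$ are finite and their difference is well defined. Since $M=M^{+}-M^{-}$,
$$\left(\mathcal{Q}^{(+)}-\mathcal{Q}^{(-)}\right)\left[F_T\,\mathds{1}_{g\le T}\right]=\mathbb{E}_{\mathbb{P}}\left[F_T M^{+}_T\right]-\mathbb{E}_{\mathbb{P}}\left[F_T M^{-}_T\right]=\mathbb{E}_{\mathbb{P}}\left[F_T M_T\right],$$
which is the asserted identity; this completes the proof.
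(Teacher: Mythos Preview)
Your proposal is correct and follows the same route as the paper: apply Theorem \ref{all} (via Corollary \ref{cor1}(1)) to $M^{+}$ and $M^{-}$ and subtract. The paper's argument is the short paragraph preceding the proposition and it simply asserts the identities for $\mathcal{Q}^{(\pm)}$ with $g=\sup\{t:M_t=0\}$ directly; you are more careful than the paper on two points it leaves implicit, namely the verification via Tanaka that $M^{\pm}$ are of class $(\Sigma)$, and the identification $g^{\pm}=g$ under $\mathcal{Q}^{(\pm)}$ using continuity of $M$.
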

\begin{remark}
If the martingale $M$ is uniformly integrable, then following Corollary \ref{cor1}, one can 
also work with a filtration satisfying the usual assumptions and take any
 stopping time $T$, not necessarily bounded. Consequently, Proposition \ref{Doob} can be viewed as an extension of Doob's optional stopping theorem: the terminal value of the martingale $M$ has to be replaced by $\left(\mathcal{Q}^{(+)}-\mathcal{Q}^{(-)}\right)$ which is a signed measure when restricted to the sets $\mathds{1}_{g\leq t}$. Theorem \ref{all} and Corollary \ref{cor1} can  in turn be interpreted as an extension of the stopping theorem to the larger class of submartingales of the class $(\Sigma)$.  
\end{remark}
\section{Some examples}
Now, let us study in more details several consequences of Theorem \ref{all}. 
\subsection{The case of a the absolute value, or the positive part, of a martingale} We suppose
that $X_t = M_t^+$,
$X_t = M_t^-$ or $X_t = |M_t|$, where $(M_t)_{t \geq 0}$ is a continuous martingale. In this case, 
$X$ is a submartingale of class $(\Sigma)$, and its increasing process is half of 
the local time of $M$ in the two first cases, and the local time of $M$ in the third case. Therefore,
one can apply Theorem \ref{all}. In particular, if $(X_t)_{t \geq 0}$ is a strictly positive
martingale, then it is a submartingale of class $(\Sigma)$, with increasing process identically
equal to zero. One deduces that for any nonnegative, integrable function $f$, 
$N^f_t = f(0) X_t$, which implies that for all $t \geq 0$, the restriction of $\mathcal{M}^f$ to 
$\mathcal{F}_t$ has density 
$f(0) X_t$ with respect to $\mathbb{P}$. Hence, since $f(A_{\infty}) = f(0)$, the restriction 
of $\mathcal{Q}$ to $\mathcal{F}_t$ has density $X_t$ with respect to $\mathbb{P}$. In 
particular, $\mathcal{Q}$ is a finite mesure, and $X$ does not vanish under $\mathcal{Q}$,
i.e. $$\mathcal{Q} [\exists t \geq 0, X_t = 0]=0.$$
\noindent

\subsection{The case of the draw-down of a martingale} Let $(M_t)_{t \geq 0}$ be a c\`adl\`ag martingale,
 starting at zero, without positive jumps.
This assumption implies that its supremum 
$$S_t := \underset{s \leq t}{\sup} \, M_s$$
is a.s. continuous with respect to $t$. The process
$$(X_t := S_t - M_t)_{t \geq 0}$$ is then a submartingale of class $(\Sigma)$
with martingale part $-M$ and increasing process $S$. One obtains, for all 
$t \geq 0$ and $F_t$ bounded, $\mathcal{F}_t$-measurable:
$$\mathcal{Q} \left[ F_t \, \mathds{1}_{g \leq t} \right] = \mathbb{E}_{\mathbb{P}} \left[F_t (S_t - M_t) \right]$$
where, in this case, $g$ is the last time when $M$ reaches its overall supremum. 

\subsection{The uniformly integrable case} Let us suppose that, in Theorem \ref{all}, the family of
variables $(X_t)_{t \geq 0}$ is uniformly integrable. In this case, $(\mathbb{E}_{\mathbb{P}} [X_t])_{t \geq 0}$,
and then $(\mathbb{E}_{\mathbb{P}} [A_t])_{t \geq 0}$ are uniformly bounded. By monotone convergence,
$A_{\infty}$ is integrable, and in particular finite a.s. Since $(A_t)_{t \geq 0}$ and $(X_t)_{t \geq 0}$ are 
uniformly integrable, $(N_t)_{t \geq 0}$ is a uniformly integrable martingale, which implies that 
there exists $N_{\infty}$ such that for all $t \geq 0$, $N_{t} = \mathbb{E} [N_{\infty} | \mathcal{F}_t]$
and $N_t$ tends a.s. to $N_{\infty}$ for $t$ going to infinity. One deduces that $X_t$ tends a.s. 
to $X_{\infty}:=N_{\infty} + A_{\infty}$. Now, for all nonnegative, bounded, integrable functions $f$, the
 martingale $N^f$ is uniformly inegrable. Moreover, if 
$f$ is continuous, $G(A_t) + X_t f(A_t)$ tends a.s. to $G(A_{\infty}) + X_{\infty}
f(A_{\infty})$ when $t \rightarrow \infty$, and the uniformly integrable martingale 
$(\mathbb{E}[G(A_{\infty})| \mathcal{F}_t])_{t \geq 0}$ tends a.s. to $G(A_{\infty})$. Therefore, the 
terminal value of $N^f$ is $X_{\infty} f(A_{\infty})$, which implies that $\mathcal{M}^f$
has density $X_{\infty} f(A_{\infty})$ with respect to $\mathbb{P}$, 
and finally: $$\mathcal{Q} = X_{\infty} \,. \mathbb{P}.$$
 This case was essentially obtained by Az\'ema, 
Meyer and Yor in \cite{AMY} and  in \cite{CNP} in relation with problems from mathematical finance. The particular case where 
 $X_t = |M_t|$, where $(M_t)_{t \geq 0}$ is a continuous uniformly integrable
 martingale, starting at zero, and for which the measure $\mathcal{Q}$ has density $|M_{\infty}|$
with respect to $\mathbb{P}$, was studied in \cite{AY2}, \cite{AY1}. 

\subsection{The case where $A_{\infty}$ is infinite almost surely} In this case, for 
any nonnegative, integrable function $f$, one has: 
$$ N_t^f = G(A_t) + f(A_t) X_t.$$
Moreover, if $X_0 = 0$ a.s., then the image of $\mathcal{Q}$ by $A_{\infty}$ is simply Lebesgue measure. 
There are several interesting examples of this particular case. 

1) It $X_t = M_t^+$,
$X_t = M_t^-$ or $X_t = |M_t|$, where $M$ is a continuous martingale, 
then we are in the case: $A_{\infty} = \infty$ iff the total local time of $M$ is a.s. infinite, or, 
equivalently, iff the overall supremum of $|M|$ is a.s. infinite. 
This condition is satisfied, in particular, if $M$ is a Brownian motion. More precisely, let us suppose that 
$\Omega = \mathcal{C}  
(\mathbb{R}_+, \mathbb{R})$, $(\mathcal{F}_t)_{t \geq 0}$ is 
the smallest right-continuous filtration, containing the natural filtration of the
 canonical process $(Y_t)_{t \geq 0}$, 
and $\mathbb{P}$ is Wiener measure. If $X_t = |Y_t|$, 
$X_t = Y_t^+$ or $X_t = Y_t^-$, the $\sigma$-finite measure 
$\mathcal{Q}$ described in Theorem \ref{all} was already
studied in  \cite{NRY}, Chapter 1. This measure satisfies a slightly more general 
result than what is written in Theorem \ref{all}.
Indeed, in their monograph, Najnudel, Roynette and Yor prove that 
there exists a unique $\sigma$-finite measure $\mathcal{W}$ on 
$\Omega$ such that for all $t \geq 0$, for all bounded, $\mathcal{F}_t$-measurable
functionals $F_t$, and for all $a \in \mathbb{R}$:
$$\mathcal{W} [F_t \mathds{1}_{g_a \leq t}] = \mathbb{P} [F_t \, |Y_t - a|],$$
$$\mathcal{W} [g_a = \infty] = 0$$
where $$g_a := \sup\{t \geq 0, Y_t = a \}.$$
Moreover $\mathcal{W}$ can be decomposed (in unique way) as the sum of 
two $\sigma$-finite measures $\mathcal{W}^+$ and $\mathcal{W}^-$, such
that:
$$\mathcal{W}^+ [F_t \mathds{1}_{g_a \leq t}] = \mathbb{P} [F_t \,
 (Y_t - a)^+],$$
$$\mathcal{W}^- [F_t \mathds{1}_{g_a \leq t}] = \mathbb{P} [F_t \,
 (Y_t - a)^-],$$
$$\mathcal{W}^+ [E_-] = \mathcal{W}^- [E_+] = 0$$
where $E_-$ is the set of trajectories which do not tend to $+ \infty$, 
and $E_+$ is the set of trajectories which do not tend to $- \infty$.
With these definitions, the measure $\mathcal{Q}$ is equal to $\mathcal{W}^+$
if $X_t = Y_t^+$, $\mathcal{W}^-$ if $X_t = Y_t^-$ and $\mathcal{W}$ if
$X_t = |Y_t|$. 

2) Let $(M_t)_{t \geq 0}$ be a c\`adl\`ag martingale, starting at zero, 
 without positive jumps. The process
$$(X_t := S_t - M_t)_{t \geq 0}$$ is a submartingale of class $(\Sigma)$
with martingale part $-M$ and increasing process $S$, and one has $A_{\infty} = \infty$ a.s.,
iff the overall supremum of $M$ is a.s. infinite. A particular case where this condition holds 
is, again, when $M$ is a Brownian motion. More precisely, if one takes the same filtered probability space as in 
the previous example, and if $X_t = ( \underset{s \leq t}{\sup} \, Y_s) -
Y_t$, then the $\sigma$-finite measure exists and is in fact equal to
$\mathcal{W}^-$. Note that the image of this measure by $X$ is equal to the image of $\mathcal{W}$ by
the absolute value. 

3) Another interesting example is studied in Chapter 3 of \cite{NRY}.
Let us take the same filtered measurable space as in the previous examples, 
endowed with a probability measure $\mathbb{P}$ under which the canonical 
process  $(Y_t)_{t \geq 0}$ is a recurrent, homogeneous diffusion with values in $\mathbb{R}_+$, starting at 
zero, and such that
 zero is an instantaneously reflecting barrier. We suppose that the infinitesimal 
generator $\mathcal{G}$ of $Y$ satisfies (for $x \geq 0$):
$$\mathcal{G} f (x) = \, \frac{d}{dm} \, \frac{d}{dS} \, f(x)$$
where $S$ is a continuous, strictly increasing function such that $S(0) = 0$ and $S(\infty) = \infty$, 
and $m$ is the speed measure, satisfying $m(\{0\}) = 0$. There exists a jointly continuous family $(L_t^y)_
{t, y \geq 0}$ of local times of $Y$, satisfying: $$\int_0^t h(Y_s) \, ds = \int_0^{\infty} h(y)
L_t^y \, m(dy)$$
for all borelian functions $h$ from $\mathbb{R}_+$ to $\mathbb{R}_+$. 
If we define the process $(X_t)_{t \geq 0}$ by:
$$X_t = S(Y_t)$$
then $(X_t - L_t^0)_{t \geq 0}$ is a $(\mathcal{F}_t)_{t \geq 0}$-martingale. Hence, if $\mathcal{F}$ is
 the $\sigma$-algebra generated by $(\mathcal{F}_t)_{t \geq 0}$, 
the assumptions of Theorem \ref{all} 
 are satisfied, and $L_{\infty}^0$ is infinite, since the diffusion $Y$ is recurrent. 
The $\sigma$-finite measure $\mathcal{Q}$ is given by the formula:
$$\mathcal{Q} = \int_0^{\infty} dl \, \mathbb{Q}_l,$$
where $\mathbb{Q}_l$ is the law of a process $(Z^{l}_t)_{t \geq 0}$, defined in the following way: let 
$\tau_l$ be the inverse local time at $l$ (and level zero) of a diffusion $R$, which has a law equal
 to the distribution of $Y$ under $\mathbb{P}$, 
and let $(\tilde{R}_u)_{u \geq 0}$ be an homogeneous diffusion, independent of $R$, starting at zero, never
hitting zero again, and 
such that for $0 \leq u < v$, $x, y > 0$:
$$P[\tilde{R}_v \in dy \, | \tilde{R}_u = x] = \frac{S(y)}{S(x)} P [R_v \in dy, \forall 
w \in [u,v], R_w > 0 \, |R_u = x]$$
(intuitively, the law of $(\tilde{R}_u)_{u \geq 0}$ is the law of $(R_u)_{u \geq 0}$, conditioned not to 
vanish), then $Z^l$ satisfies
$$Z^l_t = R_t$$
for $t \leq \tau_l$, and 
$$Z^l_{\tau_l + u}  = \tilde{R}_u$$
for $u \geq 0$. Theorem \ref{all} applies, in particular, if $Y$ is a Bessel process of dimension 
$d \in (0,2)$. If $d = 2(1- \alpha)$ (which imples $0 < \alpha < 1$), one obtains:
$$X_t = (Y_t)^{2 \alpha}  = (Y_t)^{2 - d}.$$
In this case, the process $(\tilde{R}_u)_{u \geq 0}$, involved in an essential way in
 the construction of $\mathcal{Q}$, is a Bessel process of dimension $4-d= 2 (1 + \alpha)$.
For $d = 1$ ($\alpha = 1/2$), $(X_t = Y_t)_{t \geq 0}$ is the absolute value of a Brownian motion, and
$\tilde{R}$ is a Bessel process of dimension 3. 

4) Let $\Omega$ be the space of continous functions from $\mathbb{R}_+$ to $\mathbb{R}$,
$(\mathcal{H}_t)_{t \geq 0}$
the smallest right-continous filtration, containing the natural filtration of the canonical
 process $(Y_t)_{t \geq 0}$, $\mathcal{H}$ the $\sigma$-algebra generated by $(\mathcal{H}_t)_{t \geq 0}$
and $\mathbb{P}$ the probability measure under which $(Y_t)_{t \geq 0}$ is a Bessel process of 
dimension $d := 2(1- \alpha)$ for $0 < \alpha < 1$. For $t \geq 0$, let us
 take the notation: $$g(t) := \sup \{u \leq t, 
Y_u = 0 \}$$ and let $(\mathcal{F}_t)_{t \geq 0}$ be the filtration of
the zeros of $Y$, i.e. $\mathcal{F}_t = \mathcal{H}_{g(t)}$. One defines the
 $\sigma$-algebra $\mathcal{F}$ as the $\sigma$-algebra
generated by $(\mathcal{F}_t)_{t \geq 0}$, i.e. by the zeros of $Y$. Now, the process
$$(X_t := (t-g(t))^{\alpha})_{t \geq 0}$$ is a $(\mathcal{F}_t)_{t \geq 0}$-submartingale of 
class $(\Sigma)$, and its increasing process $(A_t)_{t \geq 0}$ is given by: $$A_t = 
\frac{1}{2^{\alpha} \Gamma(1+\alpha)} \, L_t(Y)$$
where $L_t(Y)$ is the local time of $Y$ at zero, defined as the increasing process of the 
submartingale $(Y_t^{2\alpha})_{t \geq 0}$, which is of class $(\Sigma)$ (see \cite{N}, and the previous example). 
Since $Y$ is recurrent, $A_{\infty} = \infty$ a.s. Now, let $\mathcal{R}$ be the $\sigma$-finite
 measure on $(\Omega,
 (\mathcal{H}_t)_{t \geq 0}, \mathcal{H})$ which is equal to the measure 
$\mathcal{Q}$ of example 3). Because of this example, one has, 
for all bounded, $\mathcal{H}_t$-measurable functions $F_t$:
$$\mathcal{R}[F_t \, \mathds{1}_{g \leq t}] = \mathbb{E}_{\mathbb{P}} 
[F_t Y_t^{2\alpha}]$$
where $g$ is the last zero of $Y$, equal to the last zero of $X$. Now, if 
$F_t$ is $\mathcal{F}_t$-measurable, then one obtains
$$\mathcal{R}[F_t \, \mathds{1}_{g \leq t}] = \mathbb{E}_{\mathbb{P}} 
[F_t  \mathbb{E}_{\mathbb{P}}[ Y_t^{2\alpha}| \mathcal{F}_t]]$$
which implies:
$$\mathcal{R}[F_t \, \mathds{1}_{g \leq t}] = 2^{\alpha} \Gamma (1+\alpha) \,
\mathbb{E}_{\mathbb{P}} [F_t X_t]$$
Therefore, the measure $\mathcal{Q}$ satisfying the conditions given in Theorem \ref{all} 
is the restriction of the measure $$\tilde{\mathcal{Q}} := \frac{1}{2^{\alpha} \Gamma (1+\alpha)} \, \mathcal{R}$$
 to the $\sigma$-algebra $\mathcal{F}$ (generated by the zeros of $Y$). Moreover, 
the image of $\mathcal{Q}$ by $X$ is: $$ \mathcal{S} := \frac{1}{2^{\alpha} \Gamma (1+\alpha)} \, 
\int_0^{\infty} dl \, \mathbb{S}_l$$ where 
 $\mathbb{S}_l$ is the law of a process $(V^{l}_t)_{t \geq 0}$, defined in the following way: let 
$\tau_l$ be the inverse local time at $l$ (and level zero) of a diffusion $R$, with the same law as 
$Y$ under $\mathbb{P}$, and let $\gamma(t)$ be the last zero of $R$ before time $t$, for all $t \geq 0$,
 $V^l$ satisfies
$$V^l_t = (t-g(t))^{\alpha}$$
for $t \leq \tau_l$, and 
$$V^l_{\tau_l + u}  = u^{\alpha}$$
for $u \geq 0$. 
Note that in this case, we have not checked that the filtered probability space has property (P). However, 
we have proved that the conclusion of Theorem \ref{all} holds in this case.


\begin{thebibliography}{10}

\bibitem{AMY}
J.~Az{\'e}ma, P.-A. Meyer, and M.~Yor, \emph{Martingales relatives},
  S\'eminaire de {P}robabilit\'es, {XXVI}, Lecture Notes in Math., vol. 1526,
  Springer, Berlin, 1992, pp.~307--321.

\bibitem{AY2}
J.~Az{\'e}ma and M.~Yor, \emph{En guise d'{I}ntroduction (to the volume of:
  Local times)}, Ast{\'e}risque \textbf{52-53} (1978).

\bibitem{AY1}
\bysame, \emph{Sur les z\'eros des martingales continues}, S\'eminaire de
  {P}robabilit\'es, {XXVI}, Lecture Notes in Math., vol. 1526, Springer,
  Berlin, 1992, pp.~248--306.

\bibitem{BeYor}
A.~Bentata and Yor M., \emph{From {B}lack-{S}choles and {D}upire formulae to
  last passage times of local martingales. part {A}: The infinite time
  horizon}, 2008.

\bibitem{CNP}
P.~Cheridito, A.~Nikeghbali, and E.~Platen, \emph{Processes of the class sigma,
  last zero and draw-down processes}, 2009.

\bibitem{DM}
C.~Dellacherie and P.-A. Meyer, \emph{Probabilit\'e et potentiel, vol. i},
  Hermann, 1976.

\bibitem{MRY}
{D}. {M}adan, {B}. {R}oynette, and {M}. {Y}or, \emph{{F}rom {B}lack-{S}choles
  formula, to local times and last passage times for certain submartingales},
  {P}r{\'e}publication {IECN} 2008/14.

\bibitem{NRY}
J.~Najnudel, B.~Roynette, and M.~Yor, \emph{A {G}lobal {V}iew of {B}rownian
  {P}enalisations}, MSJ Memoirs, vol.~19, Mathematical Society of Japan, Tokyo,
  2009.

\bibitem{N}
A.~Nikeghbali, \emph{A class of remarkable submartingales}, Stochastic Process.
  Appl. \textbf{116} (2006), no.~6, 917--938.

\bibitem{SV}
D.-W. Stroock and S.-R.-S. Varadhan, \emph{Multidimensional diffusion
  processes}, Classics in Mathematics, Springer-Verlag, Berlin, 2006, Reprint
  of the 1997 edition.

\bibitem{Y}
M.~Yor, \emph{Les in\'egalit\'es de sous-martingales, comme cons\'equences de
  la relation de domination}, Stochastics \textbf{3} (1979), no.~1, 1--15.

\end{thebibliography}
\end{document}